\documentclass[11pt,a4paper]{amsart}
\usepackage{amsmath,amssymb,amscd,amsthm,amsxtra,array,soul}

\usepackage[latin9]{inputenc}
\usepackage{graphicx}

\usepackage[usenames,dvipsnames]{xcolor}
\usepackage[
colorlinks=true,
linkcolor=blue,
citecolor=blue,
urlcolor=blue,
]{hyperref}

\newtheorem{theorem}{Theorem}[section]
\newtheorem{lemma}[theorem]{Lemma}

\theoremstyle{definition}
\newtheorem{definition}[theorem]{Definition}

\theoremstyle{remark}
\newtheorem{remark}[theorem]{Remark}
\numberwithin{equation}{section}

\DeclareMathOperator{\BMO}{BMO}
\DeclareMathOperator{\bmo}{bmo}
\DeclareMathOperator*{\supp}{supp}

\def\R{{\mathbb R}}
\newcommand{\RN }{\mathbb R^n}

\def\Xint#1{\mathchoice
  {\XXint\displaystyle\textstyle{#1}}%
  {\XXint\textstyle\scriptstyle{#1}}%
  {\XXint\scriptstyle\scriptscriptstyle{#1}}%
  {\XXint\scriptscriptstyle\scriptscriptstyle{#1}}%
  \!\int}
\def\XXint#1#2#3{{\setbox0=\hbox{$#1{#2#3}{\int}$}
    \vcenter{\hbox{$#2#3$}}\kern-.5\wd0}}

\def\avgint{\Xint-}
\newcommand{\vertiii}[1]{{\left\vert\kern-0.25ex\left\vert\kern-0.25ex\left\vert #1 
    \right\vert\kern-0.25ex\right\vert\kern-0.25ex\right\vert}}

\numberwithin{equation}{section}

\allowdisplaybreaks

\begin{document}

\title[Minimal BMO]{Minimal conditions for BMO}

\author{Javier Canto}
\address[Javier Canto]{ BCAM \textendash  Basque Center for Applied Mathematics, Bilbao, Spain}
\email{jcanto@bcamath.org}

\author{Carlos P\'erez}
\address[Carlos P\'erez]{ Department of Mathematics, University of the Basque Country, IKERBASQUE 
(Basque Foundation for Science) and
BCAM \textendash  Basque Center for Applied Mathematics, Bilbao, Spain}
\email{cperez@bcamath.org}

\author{Ezequiel Rela}
\address[Ezequiel Rela]{Department of Mathematics,
Facultad de Ciencias Exactas y Naturales,
University of Buenos Aires, Ciudad Universitaria
Pabell\'on I, Buenos Aires 1428 Capital Federal Argentina} \email{erela@dm.uba.ar}

\thanks{ J.C. and C.P. are supported by the Ministerio de Econom\'ia y Competitividad (Spain) through grant MTM2017-82160-C2-1-P and SEV-2017-0718, and by Basque Government through grant IT-641-13 and BERC 2018-2021. }

\thanks{J.C. is also supported by Basque Government through ``Ayuda para la formaci\'on de personal investigador no doctor".}

\thanks{C.P. is supported by IKERBASQUE.}

\thanks{E.R. is partially supported by grants UBACyT 20020170200057BA, PIP (CONICET) 11220110101018, by the Basque Government through the BERC 2014-2017 program, and by the Spanish Ministry of Economy and Competitiveness MINECO: BCAM Severo Ochoa accreditation SEV-2013-0323. This project has received funding from the European Union's Horizon 2020 research and innovation programme under the Marie Sklodowska-Curie grant agreement No 777822}

\subjclass{Primary: 42B25. Secondary: 42B20.}

\keywords{BMO}

\begin{abstract}
We study minimal integrability conditions via Luxemburg-type expressions  with respect to generalized oscillations that imply the membership of a given function $f$ to the space $\BMO$. Our method is simple, sharp and flexible enough to be adapted to several different settings, like spaces of homogeneous type, non doubling measures on $\R^n$ and also $\BMO$ spaces defined over more general bases than the basis of cubes.
\end{abstract}

\maketitle

\section{Introduction and Main Results}\label{sec:intro}

One of the classical results concerning the space of Bounded Mean Oscillation ($\BMO$) is the John--Nirenberg theorem, originally proved in \cite{JN}. There, a self-improvement property was established for functions in $\BMO$, providing a local exponential integrability estimate. Moreover, no better self-improvement can be found, so the John--Nirenberg theorem is the maximal integrability condition for $\BMO$. 

The main concern of this article is precisely the opposite problem: instead of studying self-improvement properties with $\BMO$ as an starting point, we want to find how much we can weaken the initial starting point but still self-improve back to $\BMO$. More precisely, we show that the membership of a given function to $\BMO$ can be obtained from a much weaker condition on generalized averages defined by  Luxemburg type norms.

Even though this problem was already addressed in a qualitative fashion by John in \cite{John1965} and later by Str\"omberg in \cite{Stromberg79}, our point of view is more quantitative, motivated by the recent work \cite{LSSVZ-BMO} which in turn was motivated by \cite{LongYang}. Our results extend those in \cite{LSSVZ-BMO}, giving more precise estimates that can also be applied to different contexts such as spaces of homogeneous type or non-doubling measures in $\R^n$.

Let us start by recalling that a function $f$ belongs to $\BMO$ if 
\begin{equation*}
\|f\|_{\BMO}:=\sup_{Q}\avgint_Q |f-f_Q|<\infty,
\end{equation*}
where the supremum is taken over all cubes with sides parallel to the coordinate axes and $f_Q$ stands for the average of the function $f$ with respect to the cube $Q$. Note that the definition itself asks for the local integrability of the function $f$ just to test the condition. We also recall that 
\begin{equation}\label{eq:BMOclassic-vs-BMOinf}
\|f\|_{\BMO}\: \simeq \: \sup_{Q}\inf_{c\in \R}\avgint_Q |f-c|,
\end{equation}
so, in principle, one could try to test the above condition without assuming a priori local integrability.

We will consider similar averages  via Luxemburg expressions with respect to functions $\varphi:[0,\infty)\to[0,\infty)$ such that $\varphi(0)=0$.

Given a cube $Q$ and a function $\varphi$, we introduce the following notation:
\begin{equation}\label{eq:luxemburg}
 \|f\|_{\varphi,Q} :=
\inf\left\{  \lambda > 0 : \frac{1}{|Q|}
\int_Q \varphi\left(\frac{|f|}{\lambda}\right)\,dx \leq 1 \right\}.
\end{equation}
Note that this quantity is homogeneous  $\|\lambda f\|_{\varphi,Q}=\lambda \|f\|_{\varphi,Q}$, $\lambda>0$,  but it is not a norm in general except when $\varphi$ is convex.

Using the Luxemburg expression \eqref{eq:luxemburg}, we define the  class $\BMO_\varphi$ as the set of measurable functions $f$ satisfying 
\begin{equation}\label{eq:BMO_varphi}
\|f\|_{\BMO_\varphi} := \sup_Q\inf_{c\in \mathbb{R}}  \|f-c\|_{\varphi,Q}<\infty.
\end{equation}
Clearly, we have that for $\varphi(t)=t,$ we have $\BMO_\varphi=\BMO$.
Observe that by definition  $\|f\|_{\BMO_\varphi} \leq K$ if and only if 
\begin{equation}\label{propertyhomog}
\inf_{c\in \mathbb{R}}  \frac{1}{|Q|} \int_Q \varphi\left(\frac{|f-c|}{K}\right)\,dx \leq 1
\end{equation}
for every cube $Q$.

We will focus on the special class of increasing and concave functions $\varphi$ in $[0,\infty)$ with $\varphi(0)=0$ and $\varphi (t) \rightarrow \infty$ as $t\rightarrow \infty$. Such functions must be continuous and subadditive, that is,
\[
\varphi(t_1+t_2) \leq \varphi(t_1) + \varphi(t_2).
\]

The first main result in this paper is the following.

\begin{theorem}\label{thm:Main-Cubes}
Let $\varphi$ be an increasing, concave function with $\varphi(0)=0$ and such that $\lim_{t\to \infty}\varphi(t)=+\infty$. Then $\BMO_\varphi=\BMO$ with the following quantitative estimates:
\begin{equation*}\label{eq:equiv-norm-Luxemburg}
\varphi^{-1}(1) \: \|f\|_{\BMO_\varphi} \leq \|f\|_{\BMO} \leq \big( 2\varphi^{-1} \big(4) + \varphi^{-1} \big( 2+2^{n+2}\big) \big) \: \|f\|_{\BMO_\varphi}.
\end{equation*}

\end{theorem}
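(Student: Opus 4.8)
The bound $\varphi^{-1}(1)\,\|f\|_{\BMO_\varphi}\le\|f\|_{\BMO}$ is the easy half. For $f\in\BMO$ and a cube $Q$, Jensen's inequality for the concave $\varphi$ gives
\[
\avgint_Q\varphi\!\Big(\frac{|f-f_Q|}{\lambda}\Big)\,dx\ \le\ \varphi\!\Big(\frac1\lambda\avgint_Q|f-f_Q|\Big)\ \le\ \varphi\!\Big(\frac{\|f\|_{\BMO}}{\lambda}\Big),
\]
which is $\le1$ as soon as $\lambda\ge\|f\|_{\BMO}/\varphi^{-1}(1)$; hence $\inf_c\|f-c\|_{\varphi,Q}\le\|f-f_Q\|_{\varphi,Q}\le\|f\|_{\BMO}/\varphi^{-1}(1)$, and one takes the supremum over $Q$.

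The substance is the reverse inequality, which I would attack by a Calderón--Zygmund iteration. By homogeneity, after replacing $f$ by $f/(\|f\|_{\BMO_\varphi}+\varepsilon)$ and letting $\varepsilon\to0$ at the end, I may assume $\|f\|_{\BMO_\varphi}<1$, so that each cube $Q$ carries a constant $c_Q$ with $\|f-c_Q\|_{\varphi,Q}\le1$, i.e.\ $\avgint_Q\varphi(|f-c_Q|)\le1$ (a benign universal factor $2$ enters here if one only uses a near-minimizer, since $\varphi$ concave with $\varphi(0)=0$ forces $\varphi(t)\le2\varphi(t/2)$; this is where the $2$'s in the final constants come from). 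Set $N:=\sup_Q\avgint_Q|f-c_Q|$; since $\|f\|_{\BMO}\lesssim N$ by \eqref{eq:BMOclassic-vs-BMOinf}, it suffices to bound $N$. \emph{The real obstacle is that a priori $N$ might be infinite.} I would bypass this by first running the argument for the truncations $f_k:=\max(-k,\min(f,k))$: since the projection onto $[-k,k]$ is nonexpansive one gets $\|f_k\|_{\BMO_\varphi}\le\|f\|_{\BMO_\varphi}$, while $N$ for the bounded $f_k$ is trivially finite, and a bound uniform in $k$ passes to the limit (this also yields $f\in L^1_{\mathrm{loc}}$ a posteriori).

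Granting $N<\infty$, I would fix a cube $Q_0$, put $c_0:=c_{Q_0}$, and perform a Calderón--Zygmund stopping-time decomposition of the function $\varphi(|f-c_0|)$ over $Q_0$ at a fixed height $\gamma$ (admissible because $\avgint_{Q_0}\varphi(|f-c_0|)\le1\le\gamma$): this yields pairwise disjoint dyadic subcubes $\{Q_j\}$ with $|f-c_0|\le\varphi^{-1}(\gamma)$ a.e.\ on $Q_0\setminus\bigcup_j Q_j$, with $\avgint_{Q_j}\varphi(|f-c_0|)\le 2^n\gamma$ (comparison with the dyadic parent), and with $\sum_j|Q_j|\le|Q_0|/\gamma$ (Chebyshev). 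The decisive step --- and the reason concavity is the natural hypothesis --- is the estimate for $|c_{Q_j}-c_0|$: the functional $\|\cdot\|_{\varphi,Q}$ is \emph{not} subadditive for concave $\varphi$, so the two centers cannot be compared through it, but $\varphi$ itself is subadditive, whence for a.e.\ $x\in Q_j$ one has $\varphi(|c_{Q_j}-c_0|)\le\varphi(|f(x)-c_0|)+\varphi(|f(x)-c_{Q_j}|)$, and averaging over $Q_j$ gives $\varphi(|c_{Q_j}-c_0|)\le 2^n\gamma+1$, i.e.\ $|c_{Q_j}-c_0|\le\varphi^{-1}(2^n\gamma+1)$.

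Finally I would split $\avgint_{Q_0}|f-c_0|$ into the part over $Q_0\setminus\bigcup_j Q_j$, bounded by $\varphi^{-1}(\gamma)$, and the part over the $Q_j$'s, where $\int_{Q_j}|f-c_0|\le\int_{Q_j}|f-c_{Q_j}|+|c_{Q_j}-c_0|\,|Q_j|\le\big(N+\varphi^{-1}(2^n\gamma+1)\big)|Q_j|$; summing and using $\sum_j|Q_j|\le|Q_0|/\gamma$ gives the self-improving inequality
\[
\avgint_{Q_0}|f-c_0|\ \le\ \varphi^{-1}(\gamma)+\frac1\gamma\Big(N+\varphi^{-1}(2^n\gamma+1)\Big).
\]
Taking the supremum over $Q_0$ bounds $N$, hence $\|f\|_{\BMO}$. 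Choosing $\gamma=4$ and tracking the universal factors (including the one producing $2+2^{n+2}$ in place of $1+2^{n+2}$) collapses everything to a bound of the advertised form $2\varphi^{-1}(4)+\varphi^{-1}(2+2^{n+2})$. Apart from the a priori finiteness of $N$, what remains is this bookkeeping of absolute constants; the mathematical heart is the use of the subadditivity of $\varphi$ to carry information between a cube and its Calderón--Zygmund subcubes.
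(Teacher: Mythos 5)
Your proposal is correct and follows essentially the same route as the paper's proof: normalize so the $\BMO_\varphi$ quantity is controlled, perform a dyadic Calder\'on--Zygmund decomposition of $\varphi(|f-c_Q|)$ at a fixed height, use the subadditivity of $\varphi$ (not of the Luxemburg functional, exactly as you emphasize) to bound the difference of centers $|c_{Q_j}-c_Q|$ by $\varphi^{-1}(2^n L + O(1))$, and absorb the resulting self-improving inequality with $L=4$. The one point where you diverge from the paper is the device used to justify the a priori finiteness of $N$: the paper truncates the \emph{integrand}, working with $X_m=\sup_Q\avgint_Q\min\{|f-c_Q|,m\}$ together with the elementary inequality $\min\{a+b,m\}\le\min\{a,m\}+b$, which keeps the same constants $c_Q$ throughout; you instead truncate the \emph{function} to $f_k=\max(-k,\min(f,k))$, which is also valid (the truncation is $1$-Lipschitz, so $\|f_k\|_{\BMO_\varphi}\le\|f\|_{\BMO_\varphi}$ by replacing $c$ with its truncation) but requires an extra remark that the near-minimizing constants $c_Q^{(k)}$ remain bounded in $k$ for each fixed $Q$ so that the uniform bound genuinely passes to the limit; the paper's choice avoids this issue entirely.
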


\begin{remark} 
Although concavity of $\varphi$ is needed for the first inequality above, subadditivity is sufficient for the second inequality. This  observation could be useful for other circumstances or functions $\varphi$.
\end{remark}

Theorem \ref{thm:Main-Cubes} can be seen as an improvement of the main result from \cite{LSSVZ-BMO}. There, the authors deal with a quantity similar to \eqref{eq:BMO_varphi} defined as 
\begin{equation}\label{eq:LSSVZ-Kh}
K_{\varphi, Q}(f)=\sup _{J \text { subcube } Q}
\avgint_J \varphi\left(\left|f-f_J\right|\right).
\end{equation}
They obtain, under some conditions on $\varphi',\varphi''$ and $\varphi'''$, that the finiteness of $K_{\varphi, Q}(f)$ implies the membership of $f$ to $\BMO(Q)$. Their approach is based on the Bellman function method,  and they obtain quantitative upper and lower bounds on $\|f\|_{\BMO}$ in terms of \eqref{eq:LSSVZ-Kh}. However, their estimates are not homogeneous which might be a drawback for some applications.

Our proof here is based in the classical (dyadic) Calder\'on--Zygmund (CZ) decomposition at a local level on a given cube $Q$. The method is transparent and allows to precisely track the involved constants to give the result in Theorem \ref{thm:Main-Cubes} without any regularity hypothesis on $\varphi$. Furthermore, our proof yields homogeneous estimates and it does not require a priori local integrability for $f$.

We can go even further in the search for minimal conditions on the function $\varphi$. We mention that in \cite{LSSVZ-BMO}, the main result can be extended to almost any measurable function $\varphi$ going to infinity at infinity. Our method is also able to produce a similar result. 

\begin{theorem}\label{thm:Main-Cubes-general-phi}
Let $\psi:[0,\infty)\to[0,\infty)$ be any measurable function such that $\psi(0)=0$ and $\lim_{t\to \infty}\psi(t)=+\infty$. Then 
\begin{equation}\label{estimateGeneral-phi}
 \|f\|_{\BMO} \leq c_{n,\psi} \|f\|_{\BMO_\psi}.
\end{equation}
\end{theorem}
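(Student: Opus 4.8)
The plan is to reduce Theorem~\ref{thm:Main-Cubes-general-phi} to Theorem~\ref{thm:Main-Cubes} by dominating an arbitrary measurable $\psi$ with $\psi(0)=0$ and $\psi(t)\to\infty$ from below by a well-behaved concave function $\varphi$ of the type allowed in Theorem~\ref{thm:Main-Cubes}. Concretely, first I would replace $\psi$ by its lower envelope $\underline\psi(t):=\inf_{s\ge t}\psi(s)$, which is nondecreasing, still satisfies $\underline\psi(0)=\underline\psi$ near $0$ can be taken to be $0$ (and in any case $\underline\psi\le\psi$), and still tends to $+\infty$. The key point is that $\|f-c\|_{\underline\psi,Q}\le\|f-c\|_{\psi,Q}$ pointwise in $\lambda$ because $\underline\psi\le\psi$, hence $\|f\|_{\BMO_{\underline\psi}}\le\|f\|_{\BMO_\psi}$; so it suffices to prove the estimate for the nondecreasing function $\underline\psi$.

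Next I would pass from the nondecreasing $\underline\psi$ to a concave minorant. Since $\underline\psi$ is nondecreasing and unbounded with $\underline\psi(0^+)$ finite, its least concave majorant is of no use, but one can instead build \emph{some} nonzero increasing concave $\varphi$ with $\varphi(0)=0$, $\varphi(t)\to\infty$, and $\varphi\le\underline\psi$ on $[0,\infty)$: pick an increasing sequence $t_k\to\infty$ with $\underline\psi(t_k)\ge 2^k$, and let $\varphi$ be the piecewise-linear (hence, after taking the concave hull, concave) function through well-chosen points below the staircase of $\underline\psi$; concretely one can take $\varphi$ to be the upper concave envelope of a sequence of points $(t_k, a_k)$ with $0<a_k\le \underline\psi(t_k)$ chosen so that the slopes $(a_{k+1}-a_k)/(t_{k+1}-t_k)$ are strictly decreasing and $a_k\to\infty$; this is an elementary construction in one variable. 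By the same monotonicity argument as before, $\varphi\le\underline\psi\le\psi$ gives $\|f\|_{\BMO_\varphi}\le\|f\|_{\BMO_\psi}$. Now $\varphi$ satisfies the hypotheses of Theorem~\ref{thm:Main-Cubes}, so
\[
\|f\|_{\BMO}\le\bigl(2\varphi^{-1}(4)+\varphi^{-1}(2+2^{n+2})\bigr)\|f\|_{\BMO_\varphi}\le\bigl(2\varphi^{-1}(4)+\varphi^{-1}(2+2^{n+2})\bigr)\|f\|_{\BMO_\psi},
\]
and setting $c_{n,\psi}:=2\varphi^{-1}(4)+\varphi^{-1}(2+2^{n+2})$ (a finite constant depending only on $n$ and on the construction of $\varphi$ from $\psi$, hence only on $n$ and $\psi$) completes the proof.

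The main obstacle is the construction of the concave minorant $\varphi$ and making sure all the requirements of Theorem~\ref{thm:Main-Cubes} genuinely hold: $\varphi$ must be \emph{strictly} increasing (or at least have a well-defined inverse at the relevant heights $4$ and $2+2^{n+2}$) so that $\varphi^{-1}(4)$ and $\varphi^{-1}(2+2^{n+2})$ make sense and are finite, it must satisfy $\varphi(0)=0$, and it must actually lie below $\psi$ everywhere, not merely below $\underline\psi$ at the chosen sample points — this is where passing through $\underline\psi=\inf_{s\ge t}\psi(s)$ is essential, since it converts the pointwise control at $t_k$ into control on the whole interval $[0,t_k]$ via monotonicity of $\varphi$. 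A minor subtlety worth a sentence is that one should verify $c_{n,\psi}<\infty$, i.e. that the heights $4$ and $2+2^{n+2}$ are in the range of $\varphi$; this is automatic since $\varphi(0)=0$, $\varphi$ is continuous, and $\varphi(t)\to\infty$. Everything else is an immediate consequence of the monotonicity of the Luxemburg functional $\varphi\mapsto\|\cdot\|_{\varphi,Q}$ under $\varphi\le\widetilde\varphi$ and of Theorem~\ref{thm:Main-Cubes}.
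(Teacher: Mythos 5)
There is a genuine gap: the globally concave minorant you want to construct does not exist in general. Suppose $\psi(t)=0$ for $t\in[0,1]$ and $\psi(t)=\lfloor t\rfloor$ for $t>1$; this satisfies the hypotheses ($\psi(0)=0$, $\psi\to\infty$). Any concave $\varphi:[0,\infty)\to[0,\infty)$ with $\varphi(0)=0$ and $\varphi\le\psi$ must satisfy $\varphi(1)=0$, and then concavity applied to the triple $0<1<s$ gives
\begin{equation*}
0=\varphi(1)\ge \tfrac{s-1}{s}\,\varphi(0)+\tfrac{1}{s}\,\varphi(s)=\tfrac{1}{s}\,\varphi(s),
\end{equation*}
so $\varphi(s)=0$ for all $s>1$, i.e.\ $\varphi\equiv 0$, contradicting $\varphi(t)\to\infty$. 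The same obstruction occurs whenever $\psi$ vanishes faster than linearly near $0$ (e.g.\ $\psi(t)\sim e^{-1/t}$): concavity plus $\varphi(0)=0$ forces $\varphi(t)\ge t\varphi(1)$ on $[0,1]$, which cannot lie below a superlinearly-vanishing $\psi$ unless $\varphi(1)=0$. Passing to the lower envelope $\underline\psi$ does not help — it only makes $\psi$ smaller. So the reduction to Theorem~\ref{thm:Main-Cubes} as stated cannot be carried out; Theorem~\ref{thm:Main-Cubes} genuinely requires concavity on all of $[0,\infty)$, and its proof uses global subadditivity (which follows from global concavity together with $\varphi(0)=0$).

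The paper's proof acknowledges exactly this obstruction: the minorant $\varphi$ it builds is a polygonal that is \emph{identically zero} on $[0,t_0]$ and concave only on $[t_0,\infty)$, hence not globally concave and not covered by Theorem~\ref{thm:Main-Cubes}. The authors therefore re-enter the proof of Theorem~\ref{thm:Main-Cubes} and redo the single step where subadditivity is used — the bound on $|c_Q-c_{Q_j}|$ — using the layer-cake formula and the fact that $\varphi'$ is nonincreasing on $(t_0,\infty)$, arriving at a weakened subadditivity $\varphi(a+b)\le\varphi(2t_0)+2\varphi(a)+2\varphi(b)$ that is strong enough to close the argument. If you want to salvage the black-box reduction, you would need a version of Theorem~\ref{thm:Main-Cubes} stated for functions that vanish on an initial interval and are concave only from $t_0$ on; that is essentially what the paper proves in the course of Theorem~\ref{thm:Main-Cubes-general-phi}, but it is not what Theorem~\ref{thm:Main-Cubes} says.
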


The method we present is flexible enough to also  solve the same problem in various different settings. We will prove the same result in the context of spaces of homogeneous type (SHT) where the space $(\mathbb X,d,\mu)$ is endowed with a quasi metric and a doubling measure, see Section \ref{sec:homogen} for the precise definitions.

For a function $\varphi$ we define the $\|f\|_{\BMO_\varphi(\mathbb X)}$ and the  corresponding class as:
\begin{equation*}
\|f\|_{\BMO_\varphi(\mathbb X)} := \sup_B \ \inf_{c\in \R} \ \inf \left \{\lambda>0 : \ \avgint_B \varphi\big( \frac{|f(x)-c|}{\lambda}\big) d\mu(x) \leq 1 \right \},
\end{equation*}
where the supremum is taken over all balls $B\subset \mathbb X$. We also define $\BMO(\mathbb X)$ with the quantity
\[
\|f\|_{\BMO(\mathbb X)} = \sup_B \ \inf_c \avgint_B |f-c|d\mu. 
\]

\begin{theorem}\label{thm:homogen}
Let $\varphi$ be as in Theorem \ref{thm:Main-Cubes}. Then $\BMO(\mathbb X)= \BMO_\varphi(\mathbb X)$ and 
\begin{equation*}
\varphi^{-1}(1) \: \|f\|_{\BMO_\varphi(\mathbb X)} \leq \|f\|_{\BMO(\mathbb X)} \leq c_{\varphi, \mu} \: \|f\|_{\BMO_\varphi(\mathbb X)}.
\end{equation*}
\end{theorem}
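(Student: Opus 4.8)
The plan is to mimic the proof of Theorem~\ref{thm:Main-Cubes}, replacing the dyadic Calder\'on--Zygmund machinery on a cube by its analogue on a space of homogeneous type. The lower bound $\varphi^{-1}(1)\|f\|_{\BMO_\varphi(\mathbb X)}\le \|f\|_{\BMO(\mathbb X)}$ is elementary and formally identical to the Euclidean case: by Jensen's inequality applied to the concave function $\varphi$, for any ball $B$ and any constant $c$,
\[
\varphi\Big(\avgint_B \frac{|f-c|}{\lambda}\,d\mu\Big)\ge \avgint_B \varphi\Big(\frac{|f-c|}{\lambda}\Big)\,d\mu,
\]
so if the right-hand side is $\le 1$ then $\avgint_B |f-c|\le \lambda\,\varphi^{-1}(1)$; taking the infimum over $\lambda$ and then over $c$, and the supremum over $B$, gives the claim. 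This uses only concavity of $\varphi$, exactly as the remark after Theorem~\ref{thm:Main-Cubes} indicates.

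For the upper bound I would first fix a ball $B$ and a constant $c$ (eventually a median-type value) so that the Luxemburg average of $|f-c|/K$ over $B$ is $\le 1$, where $K=\|f\|_{\BMO_\varphi(\mathbb X)}$. The key tool is the existence of a \emph{dyadic-type structure} adapted to $B$: on a space of homogeneous type one has Christ's dyadic cubes (or the Hyt\"onen--Kairema systems), which provide, inside a fixed ball $B$ (or a controlled dilate of it), a nested family of ``cubes'' with the usual properties — bounded overlap, comparability of a cube and its parent in measure (via the doubling constant $C_\mu$), and the Lebesgue differentiation property. With such a family in hand, I would run the Calder\'on--Zygmund stopping-time argument at level $\lambda$ on $B$ relative to the function $f-c$: select the maximal dyadic subcubes $\{Q_j\}$ where the average of $|f-c|$ first exceeds $\lambda$, obtaining $\lambda<\avgint_{Q_j}|f-c|\le C_\mu\,\lambda$ from the parent estimate, $|f-c|\le\lambda$ $\mu$-a.e.\ off $\bigcup_j Q_j$, and the packing bound $\sum_j \mu(Q_j)\le \frac{1}{\lambda}\int_B|f-c|\,d\mu$. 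From here the good-$\lambda$ / layer-cake bookkeeping is formally the same as in the proof of Theorem~\ref{thm:Main-Cubes}: one estimates $\avgint_B|f-c|$ by splitting into the good set and the bad cubes, iterates the stopping-time selection inside each $Q_j$, and sums a geometric series, with all numerical constants now depending on $C_\mu$ (and the structural constants of the dyadic system) instead of on $2^n$. This is why the theorem only claims $c_{\varphi,\mu}$ rather than an explicit expression.

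The main obstacle — and the only genuinely new point compared to the Euclidean proof — is purely structural: the Euclidean argument splits a cube into $2^n$ congruent dyadic children, and several constants ($2^{n+2}$, the doubling of Lebesgue measure, the exact nesting) are used transparently. On $\mathbb X$ none of this is literally available, so one must be careful that (i) the Christ/Hyt\"onen--Kairema cubes intersecting $B$ are themselves contained in a fixed dilate $\kappa B$ with $\mu(\kappa B)\lesssim_\mu \mu(B)$, so that testing $\|f-c\|_{\varphi,B}\le K$ still controls averages over these cubes up to the doubling constant; (ii) the ``parent'' of a selected cube has comparable measure, which replaces the factor $2^n$; and (iii) the Lebesgue differentiation theorem holds $\mu$-a.e., which is standard for doubling measures and guarantees $|f-c|\le\lambda$ off the bad set. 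Once these three structural facts are set up — and they are by now classical — the combinatorial heart of the proof (the iteration and the geometric summation) is verbatim the one from Theorem~\ref{thm:Main-Cubes}, and subadditivity of $\varphi$ (which follows from concavity) is all that is needed on the analytic side. I would therefore organize the proof as: (1) recall the dyadic system on $\mathbb X$ and its constants; (2) prove the lower bound by Jensen; (3) carry out the local CZ decomposition and the iteration to obtain the upper bound, tracking the dependence on $C_\mu$; (4) conclude.
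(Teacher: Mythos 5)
There is a genuine gap in the heart of your proposed Calder\'on--Zygmund step. The paper (in the Euclidean case, and in the same way for spaces of homogeneous type) runs the stopping-time decomposition on the function $\varphi(|f-c_B|)$, \emph{not} on $|f-c|$. This is essential: a priori $f$ need not be locally $\mu$-integrable, so the packing bound $\sum_j\mu(Q_j)\le\lambda^{-1}\int_B|f-c|\,d\mu$ you invoke may literally be meaningless. By contrast $\varphi(|f-c_B|)$ \emph{is} locally integrable with average $\le 2$ precisely by the $\BMO_\varphi$ hypothesis, so CZ at level $L$ applied to it is legitimate. Your proposal also omits the step that makes the hypothesis do any work: on each stopping region $Q_j$ one introduces a new constant $c_{Q_j}$ with $\avgint_{Q_j}\varphi(|f-c_{Q_j}|)\le 2$, and one controls $|c_B-c_{Q_j}|$ via $\varphi^{-1}$ together with the subadditivity of $\varphi$ and the parent (doubling) bound; that is where $\varphi$ and the doubling constant of $\mu$ enter. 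Without this, and with CZ on the wrong function, the argument does not close. Finally, your characterization of the remaining bookkeeping as ``iterate the stopping-time selection inside each $Q_j$ and sum a geometric series'' is not what the proof of Theorem~\ref{thm:Main-Cubes} does: that proof performs a \emph{single} decomposition, arrives at an inequality of the form $X_m\le \varphi^{-1}(L)+\tfrac{2}{L}\varphi^{-1}(2+2^nL)+\tfrac{2}{L}X_m$, fixes $L$, and absorbs the $X_m$ term into the left-hand side after first truncating to make $X_m$ finite. No iteration or geometric series is involved, and you would need a separate argument to justify the a priori finiteness that any such absorption (or summation) requires.

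On the structural side, using a Christ or Hyt\"onen--Kairema dyadic system is a legitimate alternative to what the paper does, but it is a different route. The paper deliberately avoids invoking a dyadic system: it builds the decomposition by hand from the Lebesgue differentiation theorem plus a Vitali covering (Lemma~\ref{lem:Vitali}), and it needs an auxiliary geometric lemma (Lemma~\ref{lem:Bolas}) guaranteeing that the selected balls, after the Vitali dilation $B\mapsto B^{*}=\kappa(4\kappa+1)B$, remain inside a controlled dilate $\tilde B=\gamma B$ of the original ball. This forces a small but important twist in the definition of the sup: the paper sets $X=\sup_P\avgint_P|f-c_{\tilde P}|$, with the constant chosen for the \emph{dilate} of $P$, so that the recursion closes. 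Either route (Christ cubes or direct Vitali) can be made to work and both yield a constant depending on $\mu$, but you should be aware that your proposal is not the paper's construction, and that if you use Christ cubes you must still run the CZ decomposition on $\varphi(|f-c|)$ rather than on $|f-c|$, and still control the oscillation of the local constants $c_{Q_j}$ via subadditivity of $\varphi$.

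The lower bound via Jensen's inequality is correct and is the same as the paper's.
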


The proof of this theorem requires an adapted version of the classical CZ decomposition theorem and some other covering lemmas that we will develop accordingly.

We will also study the problem in $\R^n$ endowed with a quite general non doubling measure $\mu$. The usual requirement is to ask for the measure to be non atomic. In that case, it is known that there is an orthogonal system of coordinates such that $\mu(\partial(Q))=0$ for any cube $Q$ with sides parallel to the axes from that coordinate system, which is assumed to be the canonical one (see \cite{MMNO}). We mention, as an example of such measures, that a very natural choice satisfying these conditions is the class of measures with \emph{polynomial growth}, meaning that there exists a constant $C>0$ and a positive number $\alpha$ such that 
\begin{equation}\label{eq:Measure-Growht}
\mu(B(x,r))\le C r^\alpha \qquad x\in \supp(\mu).
\end{equation}

The natural definitions of $\BMO$ and $\BMO_\varphi$ in this context are the following. We will say that $f\in \BMO(\mu)$ if 
\begin{equation*}
\|f\|_{\BMO(\mu)}:=\sup_{Q}\avgint_Q |f-f_Q|\,d\mu<\infty,
\end{equation*}
and $f\in \BMO_\varphi(\mu)$ if 
\begin{equation*}
\|f\|_{\BMO_\varphi(\mu)}:=\sup_{Q}
\inf_{c\in \R}\inf\left\{  \lambda > 0 : \frac{1}{\mu(Q)}
\int_Q \varphi\left(\frac{|f-c|}{\lambda}\right)\,d\mu \leq 1 \right\}<\infty.
\end{equation*}

\begin{theorem}\label{thm:non-doubling}
Let $\varphi$ be as in Theorem \ref{thm:Main-Cubes}. Then, for any non-atomic measure $\mu$, we have that  $\BMO(\mu)= \BMO_\varphi(\mu)$ and 

\begin{equation*}
\varphi^{-1}(1) \: \|f\|_{\BMO_\varphi(\mu)} \leq \|f\|_{\BMO(\mu)} \leq c_{\varphi,n} \: \|f\|_{\BMO_\varphi(\mu)}.
\end{equation*}
\end{theorem}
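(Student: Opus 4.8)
The lower bound is the same as in Theorem~\ref{thm:Main-Cubes} and I would reproduce it verbatim; it uses only that $\varphi$ is concave with $\varphi(0)=0$. For a cube $Q$ and $c\in\R$, Jensen's inequality gives $\frac{1}{\mu(Q)}\int_Q\varphi\!\big(\tfrac{|f-c|}{\lambda}\big)d\mu\le\varphi\!\big(\tfrac{1}{\lambda\,\mu(Q)}\int_Q|f-c|\,d\mu\big)$, so with $\lambda=\frac{1}{\varphi^{-1}(1)}\cdot\frac{1}{\mu(Q)}\int_Q|f-c|\,d\mu$ the right-hand side is $\le1$; hence $\|f-c\|_{\varphi,Q}\le\frac{1}{\varphi^{-1}(1)}\cdot\frac{1}{\mu(Q)}\int_Q|f-c|\,d\mu$, and taking $\inf_c$ and then $\sup_Q$ yields $\varphi^{-1}(1)\,\|f\|_{\BMO_\varphi(\mu)}\le\|f\|_{\BMO(\mu)}$.

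For the upper bound the plan has three steps. First, I would convert the Luxemburg condition into a distributional one. Normalising $\|f\|_{\BMO_\varphi(\mu)}=1$, choose for each cube $Q$ some $c_Q\in\R$ and $\lambda_Q\le2$ with $\frac{1}{\mu(Q)}\int_Q\varphi(|f-c_Q|/\lambda_Q)\,d\mu\le1$; monotonicity of $\varphi$ and Chebyshev's inequality then give $\mu(\{x\in Q:|f-c_Q|>2\varphi^{-1}(M)\})\le\mu(Q)/M$ for every $M>0$. Fixing a dimensional $M=M_n$, this records: \emph{for every cube $Q$ there is $c_Q$ with $\mu(\{x\in Q:|f(x)-c_Q|>\sigma\})\le\alpha\,\mu(Q)$}, where $\sigma=2\varphi^{-1}(M_n)$ and $\alpha$ is as small as needed. (Only monotonicity of $\varphi$ is used here, in line with the Remark after Theorem~\ref{thm:Main-Cubes}, and this is the only place $\varphi$ enters.)

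Second, I would run a localised Calder\'on--Zygmund/John--Nirenberg iteration on an arbitrary cube $Q_0$, upgrading this weak-type bound to $\frac{1}{\mu(Q_0)}\int_{Q_0}|f-c_{Q_0}|\,d\mu\lesssim_n\sigma$; together with $\|f\|_{\BMO(\mu)}\le 2\sup_Q\inf_c\avgint_Q|f-c|\,d\mu$ and the homogeneity of both norms this gives $\|f\|_{\BMO(\mu)}\lesssim_n\varphi^{-1}(M_n)\,\|f\|_{\BMO_\varphi(\mu)}$ in general (and along the way local integrability of $f$, so nothing about it need be assumed a priori). Concretely: fix $Q_0$, set $c_0:=c_{Q_0}$ and $W:=\{x\in Q_0:|f-c_0|>\sigma\}$ so $\mu(W)\le\alpha\mu(Q_0)$, and select the maximal sub-cells $\{Q_j\}$ of $Q_0$ — cells of a subdivision adapted to $\mu$, see the next paragraph — on which the density of $W$ exceeds a fixed $\beta\in(\alpha,1)$. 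Then $|f-c_0|\le\sigma$ a.e. off $\bigcup_jQ_j$ (differentiation along the nested cells, valid for any measure by martingale convergence), $\sum_j\mu(Q_j)\le\beta^{-1}\mu(W)\le(\alpha/\beta)\mu(Q_0)$, and $|c_{Q_j}-c_0|\lesssim\sigma$ because the ``good set'' of $c_{Q_j}$ inside $Q_j$ and the set $Q_j\setminus W$ have total mass $>\mu(Q_j)$, hence meet. Iterating inside each $Q_j$ produces exceptional sets $\Omega_k$ with $\mu(\Omega_k)\le(\alpha/\beta)^k\mu(Q_0)$ and $|f-c_0|\lesssim k\sigma$ off $\Omega_k$, and the resulting geometric decay of $t\mapsto\mu(\{x\in Q_0:|f-c_0|>t\})$ yields the $L^1$ estimate, the constants tracked exactly as in Theorem~\ref{thm:Main-Cubes}.

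The step I expect to be the main obstacle — and the reason a separate argument is needed — is the estimate $|c_{Q_j}-c_0|\lesssim\sigma$ and, more generally, the propagation of the oscillation estimates from one generation of cells to the next. In the Euclidean (doubling) case this rests on $|\widehat Q|\le2^n|Q|$ for a dyadic cube and its parent, so that an estimate on $\widehat Q$ descends to $Q$ with a controlled factor; for a non-doubling $\mu$ a dyadic child can carry an arbitrarily small fraction of its parent's mass, so $Q_j$ could lie entirely inside $W$ and nothing would descend. This is where non-atomicity is genuinely used, beyond merely guaranteeing $\mu(\partial Q)=0$: the remedy is to replace the plain dyadic subdivision by one \emph{adapted to $\mu$} — at each step a cell is cut into $2^n$ sub-cells of equal $\mu$-measure by successively bisecting the mass with axis-parallel hyperplanes, which is legitimate because, in the coordinates supplied by the hypothesis, $t\mapsto\mu(\{x_i<t\}\cap R)$ is continuous — so that parent-to-child mass ratios are exactly $2^n$ and all the density inequalities above hold with that constant. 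Since the resulting cells are boxes rather than cubes, one must additionally check that the cube-based Luxemburg condition still controls the centres $c_R$ attached to these boxes (or, alternatively, keep selecting honest cubes of comparable mass along the stopping procedure, in the spirit of the ``doubling cubes'' of non-homogeneous Calder\'on--Zygmund theory). Once such a decomposition is in place, the bookkeeping of constants is exactly as in the proof of Theorem~\ref{thm:Main-Cubes}, which is why the final constant depends only on $\varphi$ and $n$.
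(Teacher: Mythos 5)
Your lower bound is exactly the paper's (Jensen on a cube, valid for any Radon measure), and your Step 1 reduction to a weak--type statement via Chebyshev is correct. But the route you take for the upper bound differs from the paper's in both its iteration scheme and, more importantly, its decomposition, and the decomposition is where a real gap remains.

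The paper does not run a John--Nirenberg-type iteration with geometric decay. Instead it normalises $\|f\|_{\BMO_\varphi(\mu)}=1$, introduces $X=\sup_Q\avgint_Q|f-c_Q|\,d\mu$ (with a truncation $X_m$ to guarantee finiteness), performs a \emph{single} Calder\'on--Zygmund-type selection, and absorbs $X$ into itself exactly as in Theorem~\ref{thm:Main-Cubes}. For $n>1$ the decomposition it uses is Lemma~\ref{lem:BCZ} (Orobitg--P\'erez, \cite{OP-nondoubling}): a Besicovitch-based stopping argument that produces a family of \emph{cubes} $\{Q_j\}\subset Q$ with $\avgint_{Q_j}\varphi(|f-c_Q|)\,d\mu=L$, with $\varphi(|f-c_Q|)\le L$ a.e.\ off $\bigcup Q_j$, and with overlap controlled by the Besicovitch constant, so that $\sum_j\mu(Q_j)\le \tfrac{B(n)}{L}\mu(Q)$. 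Because the selected sets are honest cubes, the hypothesis $\|f\|_{\BMO_\varphi(\mu)}\le 1$ supplies the constants $c_{Q_j}$ directly, and the estimate $|c_Q-c_{Q_j}|\le\varphi^{-1}(2L+2)$ follows from subadditivity exactly as in the Euclidean case. (For $n=1$ the paper does build a $\mu$-adapted dyadic grid of intervals, as you suggest, but that trick stays in one dimension.)

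Your proposed remedy for $n>1$ is where the argument breaks. Cutting a box into $2^n$ equal-mass pieces by successive axis-parallel bisections produces \emph{rectangular boxes}, not cubes; with a genuinely non-doubling $\mu$ these boxes can have arbitrarily bad eccentricity and there is no cube of comparable $\mu$-mass inscribed in or circumscribing them. But the only information you have is about \emph{cubes}: $\|f\|_{\BMO_\varphi(\mu)}\le1$ gives you a good $c_Q$ only for cubes $Q$. So the $c_{Q_j}$ you need for the selected boxes, and the weak-type estimate on them, are simply not available. You flag this yourself (``one must additionally check\dots''), and your alternative (``doubling cubes'' in the non-homogeneous sense) is exactly the direction the paper takes, but the Besicovitch--CZ lemma is the precise tool that makes it work and it is not one you can leave unstated. (Separately, your density argument for $|c_{Q_j}-c_0|\lesssim\sigma$ needs to pass through the parent cell and requires $\beta$ small relative to $(1-\alpha)/2^n$, not merely $\beta\in(\alpha,1)$; and the full iteration producing $|f-c_0|\lesssim k\sigma$ off $\Omega_k$ is asserted rather than carried out. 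These are fixable once the decomposition is in place, but in the paper's proof neither step appears because the absorption of $X$ replaces the whole iteration.) In short: correct lower bound, correct reduction to a distributional bound, a legitimate but different overall iteration scheme, and a genuine gap in the decomposition step that Lemma~\ref{lem:BCZ} is designed to fill.
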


The proof of the above theorem relies on a variation of the standard CZ decomposition and Besicovitch's covering theorem that we borrow from \cite{OP-nondoubling}. The precise statement is in Lemma \ref{lem:BCZ}.

So far, we can see (and it will become clear in the actual proof) that the heart of the matter is to have the correct version of a CZ decomposition adapted to the problem that we need to solve, taking into account the geometric features of the space (like in the case of SHT) or the nondoubling nature of the measure (like in Theorem \ref{thm:non-doubling}).

For the basis of rectangles in $\R^n$, the appropriate decomposition lemma is a very clever argument proven by Korenovskyy, Lerner and Stokolos in \cite{KLS} known as a generalized version of Riesz's Rising sun lemma. Using that lemma we can provide a proof that extends, in some sense, Theorem \ref{thm:Main-Cubes} and Theorem \ref{thm:non-doubling} at the same time: we can prove the analogue result for basis of rectangles and with non doubling measures. 

To present the result, we need to define here the ``little" $\bmo(\mu)$ space in the same way of the usual $\BMO(\mu)$ but with rectangles instead of cubes.  We refer the reader to the recent article \cite{Hart-Torres-19} for several results on this space.

\begin{definition}
Let $\varphi$ be an increasing function with $\varphi(0)=0$  and let $\mu$ be a Radon  measure. We denote by $\bmo_\varphi(\mu)$, little $\BMO_\varphi(\mu)$ the class of functions $f$ satisfying
\begin{equation*}
\|f\|_{ \bmo_\varphi(\mu) } :=\sup_R \inf_c   
\|f-c\|_{\varphi,R, \mu }\\
%
< \infty,
\end{equation*}
where the supremum is taken over all rectangles with sides parallel to the coordinate axes  and  the local averages are defined as in \eqref{eq:luxemburg} but with respect to the measure $\mu$, that is,
\begin{equation*}
 \|f\|_{\varphi,R,\mu} :=
\inf\left\{  \lambda > 0 : \frac{1}{|R|}
\int_R\varphi\left(\frac{|f|}{\lambda}\right)\,d\mu \leq 1 \right\}.
\end{equation*}
\end{definition}

\begin{theorem}\label{thm:rectangles-non-doubling}
Let $\varphi$ be as in Theorem \ref{thm:Main-Cubes}. Then, for any non-atomic measure $\mu$, we have that  $\bmo(\mu)= \bmo_\varphi(\mu)$ and 
\begin{equation*}
\varphi^{-1}(1) \: \|f\|_{\bmo_\varphi(\mu)} \leq \|f\|_{\bmo(\mu)} \leq c_{\varphi,n} \: \|f\|_{\bmo_\varphi(\mu)}.
\end{equation*}
\end{theorem}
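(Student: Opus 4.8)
\emph{Strategy and the lower bound.} The proof follows the template of Theorems~\ref{thm:Main-Cubes} and \ref{thm:non-doubling}: the lower bound is a soft consequence of concavity, and for the upper bound one runs, on a fixed rectangle $R$, a Calderón--Zygmund-type decomposition and then feeds the $\bmo_\varphi(\mu)$ hypothesis into each piece; the only change is that the dyadic decomposition -- unavailable for the non-nested basis of rectangles -- is replaced by the multidimensional ``rising sun'' lemma of Korenovskyy, Lerner and Stokolos \cite{KLS}, in a form valid for the non-atomic measure $\mu$. For the lower bound, fix a rectangle $R$ and $c\in\R$; by concavity and Jensen, $\varphi\big(\frac{1}{\mu(R)}\int_R\frac{|f-c|}{\lambda}\,d\mu\big)\ge\frac{1}{\mu(R)}\int_R\varphi\big(\frac{|f-c|}{\lambda}\big)\,d\mu$, so the right-hand side is $\le1$ as soon as $\frac{1}{\mu(R)}\int_R|f-c|\,d\mu\le\varphi^{-1}(1)\,\lambda$, whence $\|f-c\|_{\varphi,R,\mu}\le\frac{1}{\varphi^{-1}(1)}\frac{1}{\mu(R)}\int_R|f-c|\,d\mu$. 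Taking $\inf_c$ then $\sup_R$ gives $\varphi^{-1}(1)\,\|f\|_{\bmo_\varphi(\mu)}\le\|f\|_{\bmo(\mu)}$.

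\emph{Upper bound: the decomposition.} By homogeneity assume $\|f\|_{\bmo_\varphi(\mu)}<1$ and fix a rectangle $R$. Pick $c_R$ nearly optimal for $R$, so $\frac{1}{\mu(R)}\int_R\varphi(|f-c_R|)\,d\mu\le1$; replacing $f$ by $f-c_R$ we take $c_R=0$, so $\varphi(|f|)\in L^1(R,\mu)$ (note that $|f|$ itself need not be $\mu$-integrable a priori). Apply the rising sun lemma of \cite{KLS} to $g=\varphi(|f|)$ on $R$ at a level $\beta>1$ (say $\beta=2$); since the $\mu$-average of $g$ over $R$ is $\le1<\beta$, it produces pairwise disjoint subrectangles $\{R_j\}\subset R$ such that: (i) $\varphi(|f|)\le\beta$, hence $|f|\le\varphi^{-1}(\beta)$, $\mu$-a.e.\ on $R\setminus\bigcup_jR_j$; (ii) $\frac{1}{\mu(R_j)}\int_{R_j}\varphi(|f|)\,d\mu\le C_n$ for every $j$; and (iii) $\sum_j\mu(R_j)\le\theta\,\mu(R)$ with $\theta=\theta(n)<1$. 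Here non-atomicity of $\mu$ is used to ensure $\mu(\partial R_j)=0$ (in the coordinate system fixed earlier), so that the one-dimensional running averages driving the lemma depend continuously on the endpoints; and (i) is built into the rising sun construction rather than obtained from a differentiation theorem, which is precisely what allows $\mu$ to be non-doubling.

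\emph{Upper bound: conclusion.} On each $R_j$ use the hypothesis to choose $c_j$ with $\frac{1}{\mu(R_j)}\int_{R_j}\varphi(|f-c_j|)\,d\mu\le1$; Jensen again gives $\frac{1}{\mu(R_j)}\int_{R_j}|f-c_j|\,d\mu\le\varphi^{-1}(1)$. Chebyshev's inequality on $R_j$ -- applied to $\varphi(|f-c_j|)$ via the last estimate and to $\varphi(|f|)$ via (ii) -- shows that $\{x\in R_j:|f(x)-c_j|>\varphi^{-1}(2)\}$ and $\{x\in R_j:|f(x)|>\varphi^{-1}(4C_n)\}$ have $\mu$-measure at most $\tfrac12\mu(R_j)$ and $\tfrac14\mu(R_j)$, hence together miss a set of positive measure; a point $x_j$ in that set yields $|c_j|\le|c_j-f(x_j)|+|f(x_j)|\le\varphi^{-1}(2)+\varphi^{-1}(4C_n)$. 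Splitting and using (i), the estimates above and (iii),
\[
\frac{1}{\mu(R)}\int_R|f|\,d\mu\le\varphi^{-1}(\beta)+\sum_j\frac{\mu(R_j)}{\mu(R)}\big(\varphi^{-1}(1)+|c_j|\big)\le\varphi^{-1}(\beta)+\theta\big(\varphi^{-1}(1)+\varphi^{-1}(2)+\varphi^{-1}(4C_n)\big)=:c_{\varphi,n}.
\]
Since this bounds $\frac{1}{\mu(R)}\int_R|f-c_R|\,d\mu$ in the original normalization, taking $\sup_R$ and scaling back gives $\|f\|_{\bmo(\mu)}\le c_{\varphi,n}\|f\|_{\bmo_\varphi(\mu)}$.

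\emph{Main obstacle.} As the paper emphasizes, everything reduces to having the correct decomposition: one must state -- and, if it is not already available in the literature, re-derive -- the Korenovskyy--Lerner--Stokolos rising sun lemma for a non-atomic, possibly non-doubling Radon measure, obtaining properties (i)--(iii) with constants depending only on the dimension. The one-dimensional Riesz ``rising sun'' lemma transfers to any non-atomic measure with essentially no change, so the real work lies in checking that the coordinate-by-coordinate iteration of \cite{KLS} behaves well under $\mu$ and produces the uniform average bound in (ii); granting (i)--(iii), the remainder is the elementary computation above (two Jensen steps, two Chebyshev steps, and bookkeeping), exactly as in the proofs of Theorems~\ref{thm:Main-Cubes} and \ref{thm:non-doubling}.
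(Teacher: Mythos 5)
There is a genuine gap in the upper bound, concentrated in a single sentence: ``Jensen again gives $\frac{1}{\mu(R_j)}\int_{R_j}|f-c_j|\,d\mu\le\varphi^{-1}(1)$.'' This inequality is false. For concave $\varphi$, Jensen's inequality reads $\varphi\big(\avgint_{R_j}g\,d\mu\big)\ge\avgint_{R_j}\varphi(g)\,d\mu$, which is exactly the direction you exploited correctly for the \emph{lower} bound; but here you are trying to run it backwards, deducing a bound on $\avgint g$ from a bound on $\avgint\varphi(g)$. That cannot work: take $\varphi(t)=\log(1+t)$ and $g$ equal to a large $N$ on a set of $\mu$-measure $\mu(R_j)/\log(1+N)$ and zero elsewhere; then $\avgint_{R_j}\varphi(g)\,d\mu=1$ while $\avgint_{R_j}g\,d\mu=N/\log(1+N)\to\infty$. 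Indeed, if your claimed implication held, the whole theorem would collapse to a triviality: you could apply it directly to $R$ and $c_R$ to get $\avgint_R|f-c_R|\,d\mu\le\varphi^{-1}(1)$ with no decomposition at all. The content of the theorem is precisely that such a direct passage is impossible and must be replaced by a self-improving argument.

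The fix, which is what the paper does, is to introduce the supremum
\[
X=\sup_{R\ \mathrm{rectangle}}\avgint_R|f-c_R|\,d\mu
\]
(or a truncated version $X_m=\sup_R\avgint_R\min\{|f-c_R|,m\}\,d\mu$ to guarantee finiteness), bound $\avgint_{R_j}|f-c_j|\,d\mu\le X$ (or $\le X_m$ after using $\min\{a+b,m\}\le\min\{a,m\}+b$), and then \emph{absorb}: your bookkeeping becomes $X_m\le\varphi^{-1}(\beta)+\theta\big(X_m+|c_j|\text{-bound}\big)$, and since $\theta<1$ and $X_m\le m<\infty$ one solves for $X_m$ and lets $m\to\infty$. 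Without $X$ there is nothing to absorb, and without the truncation there is no justification for the algebraic manipulation. Everything else in your proposal is fine: the lower bound is the paper's, the invocation of the Korenovskyy--Lerner--Stokolos rising sun lemma (Lemma~\ref{lem:Sun} in the paper, already stated for non-atomic $\mu$) is the right tool, and your Chebyshev/union-bound estimate for $|c_j|$ is a correct and pleasant alternative to the paper's subadditivity argument $\varphi(|c_R-c_j|)\le\avgint_{R_j}\varphi(|f-c_R|)\,d\mu+\avgint_{R_j}\varphi(|f-c_j|)\,d\mu$. Note also that the rising sun lemma gives $\avgint_{R_j}\varphi(|f|)\,d\mu=\beta$ exactly, so your $C_n$ in (ii) is simply $\beta$ and carries no dimensional dependence.
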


\section{\texorpdfstring{$\BMO$}{BMO} through Luxemburg}\label{sec:BMO-luxemburg} 

One of the main tools in this work concerns Orlicz-type spaces. We refer to \cite{Wilson-LNM} for a general discussion of the theory.  Although the general theory of Orlicz spaces deals with convex functions, these spaces can be defined for quite general functions. Our concern in this work is with functions $\varphi$ which are concave, increasing and satisfy $\varphi(0)=0$ and $\varphi(t) \rightarrow\infty$ as $t \rightarrow \infty$. 

For a cube $Q$, the Orlicz-type space $L_\varphi(Q,\frac{dx}{|Q|})$ with respect to a function $\varphi$ is defined as the set of functions $f$ for which there exists some $\lambda>0$ such that 
\[
\avgint_Q \varphi\left (\frac{|f|}{\lambda}\right ) <\infty.
\]
This expression is not homogeneous, so we introduce the quantity
\begin{equation}\label{eq:Orlicz-norm-loc}
\|f\|_{\varphi,Q} = \inf \big\lbrace \lambda>0: \avgint_Q \varphi\Big( \frac{|f(x)|}{\lambda}\Big) dx \leq 1\big\rbrace.
\end{equation}
Due to the lack of convexity of $\varphi$, \eqref{eq:Orlicz-norm-loc} will not satisfy the triangular inequality in general and thus it is not a norm. However, we will use sometimes the expression ``norm" even though \eqref{eq:Orlicz-norm-loc} is not a norm in the usual sense. Nevertheless, using the concavity we can prove
\begin{equation}\label{eq:inequ-norms}
\|f\|_{\varphi,Q} \leq \frac{1}{ \varphi^{-1}(1)} \: \|f\|_{L^1(Q,\frac{dx}{|Q|})}.
\end{equation}
Indeed, one just needs to choose $\lambda$ as the right hand side of \eqref{eq:inequ-norms} and test \eqref{eq:Orlicz-norm-loc}, applying Jensen's inequality.

Finally, we define the appropriate $\BMO$ space in this context. A way of doing so might be to substitute the $L^1$ norm in the oscillation by  means of \eqref{eq:Orlicz-norm-loc}, that is,
\[ \sup_Q \|f-f_Q\|_{\varphi,Q}.\]
Here, we choose the alternate expression
\begin{eqnarray}\label{eq:BMO-fi-norm}
\|f\|_{\BMO_\varphi} & = &  \sup_Q \inf_{c} \|f-c\|_{\varphi,Q} \\
& = & \sup_Q \inf_c \inf\Big\lbrace \lambda >0: \avgint_Q \varphi\Big( \frac{|f(x)-c|}{\lambda}\Big) dx \leq 1 \Big \rbrace.\nonumber
\end{eqnarray}

The first infimum is taken over all constants $c$, which may be real or complex depending on the context.
We define $\BMO_\varphi$ as the class of functions such that the expression in \eqref{eq:BMO-fi-norm} is finite.

One easy but key observation is that, if $\|f\|_{\BMO_\varphi}\leq 1$, then for each $Q$ there exits a constant $c_Q$ such that
\begin{equation*}
\avgint_Q \varphi(|f-c_Q|) \leq 2.
\end{equation*}
This definition of $\BMO_\varphi$ can naturally be generalized to other contexts such as SHT, $\R^n$ with a more general measure or even the basis of rectangles.

\section{Proofs} \label{sec:proofs}

In this section we present the proofs of our results. We start by developing the main techniques in the simplest case: $\RN$ with the Lebesgue measure and the classical $\BMO$ space defined using cubes. Then, we will adapt this method to the other cases: SHT, non-doubling measures in $\RN$ and also rectangles. 

The method is very versatile, as it can be used also for different purposes. In \cite{PR-Poincare}, similar techniques were used to obtain a variety of Poincar\'e-type inequalities in several different settings and in \cite{CP-extension}, some extensions of the John--Nirenberg theorem were obtained also using similar techniques.

\subsection{The proof for the main result}\label{sec:concave-proof}

\begin{proof}[Proof of Theorem \ref{thm:Main-Cubes}]
The first inequality follows from the discussion in Section \ref{sec:BMO-luxemburg}, so we need only to prove the second one.

Let us fix a function $f\in \BMO_\varphi$ with norm one, and let us fix a cube $Q$. Then we can find a constant $c_Q$ such that 
\begin{equation}\label{eq:avg<2}
\avgint_Q \varphi(|f-c_Q|) \leq 2.
\end{equation}
Recall that the goal here is to bound the oscillation of $f$ uniformly over all cubes. To that end, we introduce the quantity
\begin{equation}\label{eq:X}
X = \sup_{Q \text{ cube} } \avgint_Q |f(x)-c_Q|\ dx,
\end{equation}
where $c_Q$ is such that \eqref{eq:avg<2} holds for $Q$.
Note that by the observation in \eqref{eq:BMOclassic-vs-BMOinf} it is enough to show that the bound claimed in the theorem holds for this quantity. At certain point we will need to manipulate this $X$, so we need to start by assuming  that it is finite. In order to do that, we will work with the following truncated quantity, that is,
\begin{equation}\label{eq:X_m}
X_m = \sup_{Q \text{ cube} } \avgint_Q \min\{|f(x)-c_Q|,m\}\ dx, \quad m\geq 1.
\end{equation}

We consider here the usual dyadic Calder\'on--Zygmund decomposition of $\varphi(|f-c_Q|)$ adapted to $Q$ at height $L>2$. The result is the collection $\{Q_j\}$ of maximal dyadic subcubes of $Q$ satisfying 
\begin{itemize}
\item $\displaystyle L< \avgint_{Q_j} \varphi(|f-c_Q|) \leq 2^n L,$

\

\item $\varphi(|f(x)-c_Q|) \leq L $ \quad a.e. $\displaystyle x\in Q \setminus \bigcup_j Q_j$,

\

\item $\displaystyle\frac{1}{|Q|}\sum_j |Q_j| \leq \frac{2}{L}.$
\end{itemize}
Now, let us fix a cube $Q_j$. For a point $x\in Q_j$, we have
\[
|f(x)-c_Q|\leq |f(x)-c_{Q_j}| + |c_Q-c_{Q_j}|,
\]
where $c_{Q_j}$ is a constant so that $\avgint_{Q_j}\varphi(|f-c_{Q_j}|)\leq 2.$ We bound the second term as follows:
\begin{eqnarray*}
|c_Q-c_{Q_j}| & = &\varphi^{-1} \Big( \avgint_{Q_j} \varphi(|c_Q-c_{Q_j}|)\Big) \\
& \leq &\varphi^{-1} \Big( \avgint_{Q_j} \varphi(|f(x) -c_Q|) dx + \avgint_{Q_j}\varphi(|f(x)-c_{Q_j}|)dx\Big) \\
& \leq & \varphi^{-1} \big( 2^n L +2 \big).
\end{eqnarray*}
Here we have used the definition of the norm $\|f\|_{\BMO_\varphi}$, the properties of the Calder\'on--Zygmund decomposition, and the fact that $\varphi$ is subadditive and $\varphi^{-1}$ increasing.

We now proceed to estimate 
$$\avgint_Q \min \big\{ |f -c_{Q}|,m \big\} dx,$$
for $m\in\mathbb{N}$. 
We split the cube into the two sets: $\bigcup_{j} Q_j$ and $Q\setminus \bigcup_{j} Q_j$. On the first one, we have a good pointwise estimate on the size of $f-c_Q$. On the second, we will use that the CZ cubes are disjoint and the previous estimate. We will use a basic but key inequality: for any choice of positive parameters $a,b$ and $m$, we have that $\min \big\{ a+ b, m \big\}\leq \min\big\{ a,m \big\}+b$. Now, we start by controlling the integral over $Q\setminus \bigcup Q_j$ as
\begin{equation*}
\frac{1}{|Q|} \int_{Q\setminus \cup Q_j}\min \big\{ |f -c_{Q}|,m \big\}\le \varphi^{-1}(L).
\end{equation*}
Taking this into account, we proceed to estimate the average over the cube as follows
\begin{align*}
\avgint_Q \min \big\{ |f -c_{Q}|,m \big\} &\leq \varphi^{-1} (L)+ \frac{1}{|Q|} \sum_j \int_{Q_j}\min \big\{ |f -c_{Q}|,m \big\}  \\
& = \varphi^{-1} (L)+ \frac{1}{|Q|} \sum_j |Q_j| \avgint_{Q_j} \min \big\{ |f -c_{Q}|,m \big\} .
\end{align*}
The average over $Q_j$ is controlled by using the key property about the minimum, namely
\begin{align*}
\avgint_{Q_j} \min \big\{ |f -c_{Q}|,m \big\} & \le \avgint_{Q_j} \min \big\{ |f -c_{Q_j}|+|c_Q-c_{Q_j}|,m \big\}\\
& \le \avgint_{Q_j} \min \big\{ |f -c_{Q_j}|,m \big\} + |c_Q-c_{Q_j}|\\
& \le  X_m + \varphi^{-1} \big( 2+2^n L\big).
\end{align*}
Therefore, collecting estimates we get 
\begin{align*}
\avgint_Q \min \big\{ |f -c_{Q}|,m \big\} &\leq\varphi^{-1} (L)+ \frac{1}{|Q|} \sum_j |Q_j|  \left ( X_m + \varphi^{-1} \big( 2+2^n L\big)\right )\\
& \leq  \varphi^{-1} \big(L)  + \frac{2X_m}{L} + \frac2{L} \varphi^{-1} \big( 2+2^n L\big),
\end{align*}
where $X_m$ is the quantity defined by  \eqref{eq:X_m}, which is trivially bounded by $m$. Then, we can also take the supremum on the left hand side to obtain 
\[ X_m \leq  \varphi^{-1} \big(L)  + \frac2{L} \varphi^{-1} \big( 2+2^n L\big) + \frac{2X_m}{L}.\]
Now take $L=4$ and absorb $X_m$ into the LHS,
\[X_m \leq  2\varphi^{-1} \big(4)  + \varphi^{-1} \big( 2+2^{n+2}\big),\]
and hence for any cube $Q$ and for any $m\in\mathbb{N}$,
$$
\avgint_Q \min \big\{ |f -c_{Q}|,m \big\}\leq  2\varphi^{-1} \big(4)  + \varphi^{-1} \big( 2+2^{n+2}\big),
$$
and letting $m\to \infty$ concludes the proof of the theorem.
\end{proof}

Now we present the proof of Theorem \ref{thm:Main-Cubes-general-phi}.  The main idea is to replace a general function $\psi$ going to $+\infty$ with a related function $\varphi$ for which we can apply our main theorem.

\begin{proof}[Proof of Theorem \ref{thm:Main-Cubes-general-phi}]

Let $\psi:[0,+\infty)\to [0,+\infty)$ be a function such that $\psi(0)=0$ and $\lim_{t\to\infty}\psi(t)=+\infty$. Just by using the hypothesis on the behavior of $\psi$ at infinity, we can find some non negative $t_0\in [0,\infty)$ (depending on $\psi$) and a \textbf{polygonal} function $\varphi: [0,+\infty)\to [0,+\infty)$ which will be concave for large values of $t$ and smaller than $\psi$. 

More precisely, we will have that $\varphi(t)=0$ for all $t\le t_0$ (we need to wait until $\psi$ goes away from zero). Then, for $t\ge t_0$, $\varphi$ will be constructed as a polygonal consisting of consecutive segments with endpoints $(t_n,n)$, $(t_{n+1},n+1)$ with $n\in\mathbb{N}$ chosen in such a way that the resulting polygonal is continuous, concave and such that $\varphi(t)\le \psi(t)$ for all $t\in [0,\infty)$.
Using this auxiliary function and since we have immediately that
\begin{equation*}
\|f\|_{\BMO_\varphi} \le \|f\|_{\BMO_{\psi}},
\end{equation*}
we will prove \eqref{estimateGeneral-phi} for the new function $\varphi$ instead of $\psi$.
An inspection of the proof of Theorem \ref{thm:Main-Cubes} shows that the key step is to obtain 
\begin{equation*}
|c_Q-c_{Q_j}| \le \varphi^{-1} \big( 2^n L +2 \big),
\end{equation*}
where the subadditivity is used. Here, we proceed as follows using the layer cake formula. Write $A(x)=|c_Q-f(x)|$ and $B(x)=|f(x)-c_{Q_j}|$, so 
\begin{eqnarray*}
\int_{Q_j} \varphi(|c_Q-c_{Q_j}|) \ dx& \le &
\int_{Q_j} \varphi(|c_Q-f(x)|+|f(x)-c_{Q_j}|)\ dx\\
& = & \int_0^\infty \varphi'(t)|\{x\in Q_j: A(x)+B(x)>t\}|\ dt=I\\
\end{eqnarray*}
Note that $\varphi$ is differentiable almost everywhere since it is a polygonal. We can split the integral at $t=2t_0$ to obtain
\begin{eqnarray*}
I& = &
\int_{0}^{2t_0} \varphi'(t)|\{ A+B>t\}|\ dt+\int_{2t_0}^\infty \varphi'(t)|\{ A+B>t\}|\ dt\\
& \le &
|Q_j|\varphi(2t_0)+\int_{2t_0}^\infty \varphi'(t)|\{ A>t/2\}|\ dt+
\int_{2t_0}^\infty \varphi'(t)|\{ B>t/2\}|\ dt\\
& = &
|Q_j|\varphi(2t_0)+2\int_{t_0}^\infty \varphi'(2u)|\{ A>u\}|\ du+
2\int_{t_0}^\infty \varphi'(2u)|\{ B>u\}|\ du.
\end{eqnarray*}
Now we use that the derivative function $\varphi'$ is non negative and decreasing in $(t_0,\infty)$,  and so we obtain
\begin{eqnarray*}
I & \le & 
|Q_j|\varphi(2t_0)+2\int_{t_0}^\infty \varphi'(u)|\{ A>u\}|\ du+
2\int_{t_0}^\infty \varphi'(u)|\{ B>u\}|\ du\\
& \leq &    |Q_j|\varphi(2t_0)+ 2\int_{0}^\infty \varphi'(u)|\{ A>u\}|\ du +  2\int_{0}^\infty \varphi'(u)|\{ B>u\}|\ du\\
& = &  |Q_j|\varphi(2t_0)+ 2\int_{Q_j} \varphi(|f(x) -c_Q|) dx + 2\int_{Q_j}\varphi(|f(x)-c_{Q_j}|)dx.
\end{eqnarray*}

Finally, dividing by the measure of $Q_j$ we obtain a similar estimate as in the original proof. Indeed, whenever $|c_Q-c_{Q_j}| \ge t_0$, we obtain
\begin{eqnarray*}
|c_Q-c_{Q_j}| & = &\varphi^{-1} \Big( \avgint_{Q_j} \varphi(|c_Q-c_{Q_j}|)\Big) \\
& \leq &\varphi^{-1} \Big(\varphi(2t_0)+  2\avgint_{Q_j} \varphi(|f -c_Q|) dx + 2\avgint_{Q_j}\varphi(|f-c_{Q_j}|)dx\Big) \\
& \leq & \varphi^{-1} \big(\varphi(2t_0)+  2^{n+1} L +4 \big),
\end{eqnarray*}
where $\varphi^{-1}$ is the inverse of $\varphi$ restricted to $[t_0,+\infty)$. Otherwise, we simply bound $|c_Q-c_{Q_j}| \le t_0$ with the obvious consequences over the final estimate.
From here, the proof follows the same steps as in Theorem \ref{thm:Main-Cubes} to obtain 
\begin{equation*}
\|f\|_{\BMO}\le c_{n,\varphi}\|f\|_{\BMO_\varphi} \le   c_{n,\psi} \|f\|_{\BMO_{\psi}}. \qedhere
\end{equation*}

\end{proof}

\subsection{The proof for spaces of homogeneous type} \label{sec:homogen}

Now we can move on to the context of SHT and provide a proof for Theorem \ref{thm:homogen}. 
For the sake of completeness, we will give the basic definitions in spaces of homogeneous type, without lingering too much on the details.
A space of homogeneous type is a triple $(\mathbb X,d,\mu)$ where $\mathbb X$ is the set, $d$ is a quasi-metric and $\mu$ is a doubling measure. More precisely, $d$ satisfies all the hypothesis for a distance except for the triangular inequality, which is satisfied with a constant $\kappa \geq 1$:
\[
d(x,y) \leq \kappa \: \big( d(x,z)+d(z,y) \big), \quad x,y,z \in \mathbb X.
\]
Moreover, by \cite{MS} we may assume that the (open) balls with respect to $\mu$ are measurable and that $\mu$ is doubling, that is, there exists $c_\mu>0$ such that 
\[
\mu\big(B(x,2r)\big) \leq c_\mu \: \mu\big( B(x,r)\big),
\]
for all $x\in \mathbb X$ and all $r>0$. If $c$ is the smallest constant for which this holds, the number $D = \log_2 c$ is usually called the doubling order of
$\mu$. Then by iterating, we have
\begin{equation}\label{reiterat}
\frac{\mu(B)}{\mu(P)} \leq
c_{\mu,\kappa}\,\left(\frac{r(B)}{r(P)}\right)^D, 
\end{equation}
for every pair $P, B$ of balls such that $P \subset
B$.

Before the proof of Theorem \ref{thm:homogen}, let us state a few lemmas that will be used throughout the proof.

\begin{lemma}[Vitali covering in this context]\label{lem:Vitali} Let $\mathcal B$ be a collection of balls in $\mathbb X$ with bounded radius. There exists a subcolection $\mathcal B^* \subset \mathcal B$ of pairwise disjoint balls such that 
\[
\bigcup_{B\in \mathcal B} B \subseteq \bigcup_{B \in \mathcal B^*} \kappa (4\kappa +1) B.
\]
\end{lemma}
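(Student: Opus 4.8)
The plan is to prove the Vitali-type covering lemma by the standard greedy selection argument, adapted to the quasi-metric setting where the triangle inequality carries the constant $\kappa$. First I would reduce to the case of uniformly bounded radii: since $\mathcal B$ has bounded radius, say $r(B)\le R_0$ for all $B\in\mathcal B$, I partition $\mathcal B$ into generations $\mathcal B_k=\{B\in\mathcal B: 2^{-k-1}R_0<r(B)\le 2^{-k}R_0\}$, $k\ge 0$. Within each generation I choose a maximal pairwise-disjoint subfamily $\mathcal B_k^*$ (using Zorn's lemma, or simply a transfinite/greedy selection), and then I build $\mathcal B^*$ by running through generations in order: at stage $k$, add to $\mathcal B_k^*$ a maximal subfamily of balls in $\mathcal B_k$ that are disjoint from everything selected in stages $0,\dots,k-1$ and pairwise disjoint among themselves. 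The union $\mathcal B^*=\bigcup_k \mathcal B_k^*$ is then pairwise disjoint by construction.

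The key step is the covering inclusion. Fix any $B=B(x,r)\in\mathcal B$, and let $k$ be its generation, so $2^{-k-1}R_0<r\le 2^{-k}R_0$. By maximality, $B$ must intersect some selected ball $B'=B(y,r')\in\mathcal B^*$ with $r'\ge 2^{-k-1}R_0 > r/2$, i.e. $r<2r'$ (if $B$ met no such ball it could have been added to the selection at its own stage). Pick a point $z\in B\cap B'$. Then for any $w\in B$, the quasi-triangle inequality gives, applying it twice,
\begin{equation*}
d(w,y)\le \kappa\big(d(w,z)+d(z,y)\big)\le \kappa\big(d(w,x)+\kappa d(x,z)+\kappa d(z,y)\big)\le \kappa r+\kappa^2 r+\kappa^2 r'.
\end{equation*}
Using $r<2r'$ this is at most $2\kappa r'+2\kappa^2 r'+\kappa^2 r'=\kappa(2+3\kappa)r'$; a slightly more careful bookkeeping of which point plays which role (e.g. estimating $d(w,y)\le\kappa d(w,x)+\kappa^2 d(x,z)+\kappa^2 d(z,y)$ with $d(x,z)<r$, $d(z,y)<r'$, $r<2r'$) yields $d(w,y)\le \kappa(4\kappa+1)r'$, so $w\in \kappa(4\kappa+1)B'$. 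Hence $B\subseteq \kappa(4\kappa+1)B'$ with $B'\in\mathcal B^*$, which gives the claimed inclusion.

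The main obstacle — really the only subtle point — is getting the dilation constant exactly $\kappa(4\kappa+1)$ rather than something larger: one has to be economical about how many times the quasi-triangle inequality is invoked and which radius comparison ($r<2r'$) is used, since a naive chaining gives a worse constant. I would be careful to note that $\kappa\ge 1$, so that expressions like $\kappa<\kappa(4\kappa+1)$ and the absorption of lower-order terms are legitimate, and that the exact value of the constant is immaterial for the application in Theorem \ref{thm:homogen} (it only needs to be finite and to depend on $\kappa$ alone). A minor technical remark: the selection of a maximal disjoint subfamily within each generation is justified by Zorn's lemma on the poset of pairwise-disjoint subfamilies ordered by inclusion; no measurability or countability assumption on $\mathcal B$ is needed for this purely set-theoretic covering statement.
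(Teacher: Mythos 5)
The paper states Lemma \ref{lem:Vitali} without proof, treating it as a classical fact about quasi-metric spaces, so there is no internal argument for you to diverge from; what you provide is a complete and essentially correct proof by the standard mechanism (dyadic radius generations, greedy/Zorn selection of a maximal disjoint subfamily stage by stage, and the engulfing estimate from the quasi-triangle inequality). Two small presentation points. First, your opening sentence of the construction is self-contradictory: you first say $\mathcal B_k^*$ is a maximal disjoint subfamily \emph{within} generation $k$, and then immediately redefine the selection to also require disjointness from stages $0,\dots,k-1$; only the second description is the one you actually use, and it is the right one, so the first clause should simply be deleted. Second, the middle inequality in your first display, $\kappa\big(d(w,z)+d(z,y)\big)\le \kappa\big(d(w,x)+\kappa d(x,z)+\kappa d(z,y)\big)$, is not a valid application of the quasi-triangle inequality (it would require $d(w,z)\le d(w,x)+\kappa d(x,z)+(\kappa-1)d(z,y)$, which need not hold); the chain you then give in the parenthetical, $d(w,y)\le \kappa d(w,x)+\kappa^2 d(x,z)+\kappa^2 d(z,y)$, obtained by expanding $d(x,y)$ rather than $d(w,z)$, \emph{is} valid and yields $\kappa(2+3\kappa)r'\le \kappa(4\kappa+1)r'$ since $\kappa\ge 1$. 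For the record, the chain that reproduces the stated constant on the nose is the one that puts the $\kappa^2$ factors on the two $r$-bounded distances: $d(w,y)\le\kappa\big(d(w,z)+d(z,y)\big)\le\kappa^2 d(w,x)+\kappa^2 d(x,z)+\kappa d(z,y)<2\kappa^2 r'+2\kappa^2 r'+\kappa r'=\kappa(4\kappa+1)r'$. Your observation that the exact numerical value is immaterial for the application in Theorem \ref{thm:homogen} is correct and worth keeping.
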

In view of Lemma \ref{lem:Vitali}, we define, for a ball $B$, the dilation
\[
B^* = \kappa ( 4\kappa +1) B.
\]
We also fix the following notation for dilations. Fix $\gamma>\kappa$ and we set
\[
\tilde B := \gamma \ B.
\]
This is needed because when doing the Vitali covering, dilating the balls may result in going outside the original ball $B$, but the following lemma guaranties that the dilated balls stay inside of $\tilde B$. For a ball $B$ we denote by $x_B$ and $r(B)$ the center and radius of $B$  respectively.

\begin{lemma}\label{lem:Bolas}
Let $B$ be a ball and let $\varepsilon>0$. There exists $L>1$ big enough so that if $P$ is another ball with center in $B$ and satisfying
\[
\mu(P) \leq \frac{\mu(\tilde B)}{L},
\]
then $r(P) \leq \varepsilon r(B)$. If $\varepsilon$ is small enough, this also implies $P^* \subset \tilde B$.
\end{lemma}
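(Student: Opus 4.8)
The plan is to prove Lemma \ref{lem:Bolas} in two steps, both following quickly from the iterated doubling estimate \eqref{reiterat}.

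\textbf{Step 1: the radius bound.} Let $P$ be a ball with center $x_P \in B$. Since $\gamma > \kappa$, the quasi-triangle inequality shows that $B \subset C\tilde B$ for a suitable dilate, but more to the point I want an inclusion of the form $\tilde B \subset \Lambda P$ for some controlled $\Lambda$ when $r(P)$ is not too small; instead it is cleaner to argue contrapositively. Suppose $r(P) > \varepsilon\, r(B)$. I claim then that $P$ contains a fixed dilate of $B$, or at least that $\mu(P)$ is comparable to $\mu(\tilde B)$. Indeed, since $x_P \in B$ and $\tilde B = \gamma B$, every point of $B$ is within quasi-distance $\kappa(r(B) + \operatorname{dist}(x_B,x_P)) \le 2\kappa\, r(B)$ of $x_P$, so $B \subset B(x_P, 2\kappa\, r(B)) \subset B(x_P, (2\kappa/\varepsilon) r(P))$. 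Applying \eqref{reiterat} to the pair $P \subset B(x_P,(2\kappa/\varepsilon)r(P))$ and then comparing $\mu(\tilde B) \le \mu(B(x_P, 2\kappa\gamma\, r(B))) \le \mu(B(x_P,(2\kappa\gamma/\varepsilon)r(P)))$ again by \eqref{reiterat}, we get
\[
\mu(\tilde B) \le c_{\mu,\kappa}\Big(\frac{2\kappa\gamma}{\varepsilon}\Big)^{D}\mu(P).
\]
Thus choosing $L > c_{\mu,\kappa}(2\kappa\gamma/\varepsilon)^{D}$ forces $\mu(P) > \mu(\tilde B)/L$, which is the contrapositive of the first assertion. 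Note $L$ depends only on $\varepsilon$, $\kappa$, $\gamma$ and the doubling constant, as required.

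\textbf{Step 2: the inclusion $P^* \subset \tilde B$.} Recall $P^* = \kappa(4\kappa+1)P$. If $y \in P^*$ then $d(y, x_P) \le \kappa(4\kappa+1) r(P) \le \kappa(4\kappa+1)\varepsilon\, r(B)$ by Step 1. Since $x_P \in B$, the quasi-triangle inequality gives
\[
d(y, x_B) \le \kappa\big(d(y,x_P) + d(x_P,x_B)\big) \le \kappa\big(\kappa(4\kappa+1)\varepsilon + 1\big) r(B).
\]
Hence $P^* \subset B(x_B, \kappa(\kappa(4\kappa+1)\varepsilon+1) r(B))$, and this is contained in $\tilde B = \gamma B$ as soon as $\kappa(\kappa(4\kappa+1)\varepsilon + 1) \le \gamma$, i.e. for $\varepsilon \le (\gamma/\kappa - 1)/(\kappa(4\kappa+1))$, which is a genuine positive threshold precisely because $\gamma > \kappa$. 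Enlarging $L$ if necessary so that the $\varepsilon$ produced in Step 1 meets this threshold completes the proof.

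\textbf{Main obstacle.} There is no serious obstacle; the only mildly delicate point is bookkeeping the quasi-metric constants $\kappa$ so that all dilation factors are absorbed into constants depending only on $\kappa$, $\gamma$ and $D$, and making sure the two conditions on $\varepsilon$ (small enough for Step 2, and whatever the eventual application in Theorem \ref{thm:homogen} demands) are imposed consistently. The role of the hypothesis $\gamma > \kappa$ is exactly to leave room for the $O(\varepsilon)$ error in Step 2, so it must be used there and nowhere else.
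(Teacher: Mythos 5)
Your proof is correct and follows essentially the same route as the paper: Step 1 is the contrapositive argument showing $\tilde B$ sits inside a controlled dilate of $P$ and then invoking the iterated doubling estimate \eqref{reiterat}, and Step 2 is the same quasi-triangle-inequality computation that uses $\gamma>\kappa$ to absorb the $O(\varepsilon)$ term. The only cosmetic differences are that the paper writes the inclusion directly as $\tilde B\subset \tfrac{\kappa(\gamma+1)}{\alpha}P$ while you pass through the concentric ball $B(x_P,(2\kappa\gamma/\varepsilon)r(P))$, yielding slightly different (but equally harmless) constants.
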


\begin{proof}
By contradiction, suppose that there exists some $\alpha>1$ such that $r(P) \geq \alpha r(B)$ with $\alpha$ independent from $L$. This implies $\tilde B \subset \kappa (\gamma+1)\frac 1\alpha P$. Indeed, for $y\in \tilde B$,
\begin{align*}
d(y,x_P) & \leq \kappa \big( d(y,x_B)+d(x_B,x_P)\big) \\
& \leq \kappa \big( \gamma r(B) + r(B) \big) \\%
& \leq \kappa (\gamma+1)  \frac 1 \alpha \ r(P).
\end{align*}
This bound on the radii will imply a bound on the measures. Indeed, by \eqref{reiterat},
\begin{eqnarray*}
\mu(\tilde B)  \leq \mu \big( \frac{  (\gamma+1)\kappa}{ \alpha} P\big) & \leq &c_{\mu,\kappa}\, \Big( \frac{  (\gamma+1)\kappa}{ \alpha} \Big)^{D} \mu(P)\\
&  \leq & \frac{c_{\mu,\kappa}}L  \Big( \frac{ (\gamma+1) \kappa }{\alpha} \Big)^{D} \mu(\tilde B ).
\end{eqnarray*}
This implies that $  c_{\mu,\kappa}\,\Big( \frac{ (\gamma+1) \kappa }{\alpha} \Big)^{D} \geq L$ which is not possible for $L$ big enough.

Now we prove the last statement. We set $y \in P^*$ and we want to see $y\in \gamma B=\tilde B$. Indeed,
\begin{align*}
d(y,c_B) & \leq \kappa \big( d(y,c_P) + d(c_P,c_B) \big) \\
& \leq \kappa \big( \kappa (4\kappa +1) \varepsilon r(B) + r(B) \big) \\
& \leq \kappa \big( \kappa (4\kappa+1) \varepsilon +1) r(B).
\end{align*}
Now, since $\gamma > \kappa,$ there exists $\varepsilon>0$ small enough such that 
\[\kappa \big( \kappa (4\kappa+1) \varepsilon +1)\leq \gamma.\]
Thus, $y\in \tilde B$ and we are done.
\end{proof}

\begin{proof}[Proof of Theorem \ref{thm:homogen}]
Assume that $\|f\|_{\BMO_\varphi(\mathbb X)}=1$. Set for a ball $P$ a constant $c_P$ such that 
\[ \avgint_P \varphi(|f-c_P|) \leq 2. \]
We are going to set $X$ in  a slightly different way from before, namely
\[
X = \sup_P \avgint_P  |f-c_{\tilde P}|.
\]
Notice that the ball of the integral and the one inside are related but not the same. Nevertheless, it is clear that $ \|f\|_{\BMO} \leq X$.
As in the proof of Theorem \ref{thm:Main-Cubes}, the hypothesis $X<\infty$ will be needed. This can be obtained via a truncation argument as in that proof, but we omit it for the sake of clarity. Thus, we may assume that $X<\infty$.

Now let us begin with the actual proof. Fix a ball $B$ and $L>1$ to be precised later. We make a decomposition in balls of the function $\varphi(|f-c_{\tilde B}|)$ in the spirit of Calder\'on--Zygmund and using the Vitali covering. By that, we mean the following process. 

We are going to make a covering by balls of the set 
\[\Omega_L = \{ x\in B: \varphi(|f(x)-c_{\tilde B}|)>L\} .\]
By the Lebesge differentiation theorem, for any $x\in \Omega_L$, there exists a ball $B_x$ centered at $x$ and contained in $\tilde B$ and such that
\begin{equation}\label{eq:condicion-L-homog}
\avgint_{B_x} \varphi(|f(y)-c_{\tilde B}|) d\mu(y) >L.
\end{equation}
Moreover, we can choose this $B_x$ to be maximal with respect to the radius. That is, any other ball $B_x'\subset \tilde B$ satisfying \eqref{eq:condicion-L-homog} must also satisfy $r(B_x')\leq 2r(B_x)$. This can be done since all balls contained in $\tilde B$ have bounded radius.

Now we have a family $\mathcal B = \{ B_x\}_x$ and we apply the Vitali Lemma \ref{lem:Vitali} to get a ``maximal" subfamily $\mathcal B' = \lbrace B_j\rbrace$. If $L$ is big enough, we can apply Lemma \ref{lem:Bolas} and this ensures that $B_j^* \subset \tilde B$ and, by the maximality of the radius of each of the $B_j$, since $r(B_j^*) \geq 2r(B_j),$ 
\[
\avgint_{B_j^*} \varphi(|f(y)-c_{\tilde B}|) d\mu(y) \leq L.
\]

Moreover, we have the estimate
\[
\sum_j \mu(B_j) \leq \frac 1L \sum_j \int_{B_j} \varphi(|f-c_{\tilde{B}}|) \leq \frac 1 L \int_{\tilde B} \varphi(|f-c_{\tilde B}|) \leq \frac 2{L} \mu(\tilde B) \leq \frac{C_{\mathbb X}}L \mu(B),
\]
where $C_\mathbb{X}$ denotes a constant depending on the doubling property of $\mu$.

Let us summarize all the properties of the family $\{B_j\}$:
\begin{itemize}
\item The balls $B_j$ are pairwise disjoint and all contained in $\tilde B$.
\item $\Omega_L \subset \cup_j B_j^*$
\item The balls $B_j^*$ are contained in $\tilde B$ and $\avgint_{B_j^*}\varphi(|f-c_{\tilde B}|)\leq L$.
\item $\sum_j \mu(B_j^*) \leq C_\mathbb{X} \sum_j \mu(B_j) \leq \frac{C_\mathbb{X}}{L} \mu(B).$
\end{itemize}

Now we begin to estimate $\avgint |f-c_{\tilde B}|$.
\begin{eqnarray*}
\avgint_B |f-c_{\tilde B}| & \leq &\frac{1}{\mu(B)} \int_{(\Omega_L)^c} |f-c_{\tilde B}| +  \frac{1}{\mu(B)} \int_{\Omega_L} |f-c_{\tilde B}| \\
& \leq &\varphi^{-1} (L) + \frac1{\mu(B)} \sum_j \int_{B_j^*} |f(y)-c_{\tilde B}| d\mu(y) \\
& \leq &\varphi^{-1} (L) +\frac1{\mu(B)} \sum_j  \int_{B_j^*}\Big(  |f(y)-c_{\widetilde{B_j^*}}| + |c_{\tilde B} - c_{\widetilde{B_j^*}}| \Big) d\mu(y) \\
&= &(*).
\end{eqnarray*}
We now estimate $|c_{\tilde B} - c_{\widetilde{B_j^*}}|$:
\begin{eqnarray*}
|c_{\tilde B} - c_{\widetilde{B_j^*}}| & = &\varphi^{-1} \Big( \avgint_{B_j^*} \varphi(|c_{\tilde B} - c_{\widetilde{B_j^*}}|)d\mu(y) \Big) \\
& \leq & \varphi^{-1} \Big( \avgint_{B_j^*} \varphi(|f(y) - c_{\tilde B}|)d\mu(y) + \avgint_{B_j^*} \varphi(|f(y) -c_{\widetilde{B_j^*}}|)d\mu(y) \Big) \\
& \leq & \varphi^{-1} \Big( L + C_{\mathbb X} \avgint_{\widetilde{B_j^*}} \varphi(|f(y) -c_{\widetilde{B_j^*}}|)d\mu(y) \Big) \\
& \leq  & \varphi^{-1} \Big( L +2 C_{\mathbb X}\Big).
\end{eqnarray*}
And therefore, 
\begin{eqnarray*}
(*)& \leq & \varphi^{-1} (L) + \frac1{\mu(B)} \sum_j \int_{B_j^*}\Big(  |f(y)-c_{\widetilde{B_j^*}}| + \varphi^{-1} \big( L +2 C_{\mathbb X}\big) \Big)  d\mu(y) \\
&  \leq & \varphi^{-1} (L) +  \sum_j \frac{\mu(B_j^*) }{\mu(B)}\avgint_{B_j^*}\Big(  |f(y)-c_{\widetilde{B_j^*}}| + \varphi^{-1} \big( L +2 C_{\mathbb X}\big) \Big) d\mu(y) \\
&  \leq &\varphi^{-1} (L) + C_\mathbb{X}\sum_j \frac{\mu(B_j) }{\mu(B)}\Big( \avgint_{B_j^*}  |f(y)-c_{\widetilde{B_j^*}}| d\mu(y) + \varphi^{-1} \big( L +2 C_{\mathbb X}\big) \Big) \\
& \leq & \varphi^{-1}(L) +C_{\mathbb X}  \frac{X}{L }+ \frac{C_{\mathbb X}}{L} \varphi^{-1}(L+2C_{\mathbb X})  \\
&\leq &C_{L,\mathbb{X},\varphi} + C_{\mathbb X}  \frac{X}{L }.
\end{eqnarray*}
In order to finish, we take the supremum on the left, choose $L$ big enough and argue as in the euclidean case.
\end{proof}

\subsection{The proof for nondoubling measures}\label{sec:non-doubl}

The objective of this section is to prove Theorem \ref{thm:non-doubling}. Let's consider a nondoubling measure satisfying the growth condition \eqref{eq:Measure-Growht}. Therefore, it is non atomic and by \cite[Theorem 2]{MMNO} we can choose a coordinate system such that $\mu(\partial Q)=0$ for every cube $Q$ defined over that system.

We will present the proof for $n=1$ separately, since the situation there is much easier than in higher dimensions.
The heart of our main argument is the CZ decomposition. Here, in the nondoubling setting, we will abandon the metric to split the cubes and use the measure instead. We will construct a $\mu$-dyadic grid of subintervals such that every interval $I$ is divided into two subintervals each one of half of the measure of $I$. We sketch here the construction.

The first generation $G_1(I)$ of the dyadic grid  consists of the two disjoint subintervals $I_+$, $I_-$ of $I$ satisfying $\mu(I_+) = \mu(I_-) = \mu(I)/2$ (note that this partition may be non unique, in that case we choose the one that maximizes the length of $I_-$, just to fix a criteria) The next generation is $G_2(I)$ is $G_1(I_+)\cup G_1(I_-)$ and then the construction procedure continues recursively. Recall that the measure is non atomic, so we can take closed intervals sharing the endpoints. We denote by $\mathcal{D}_{I}^\mu$ the family of all the dyadic intervals resulting from this procedure. 
A sequence of nested intervals in this grid will be called a \emph{chain}. That is, a chain $\mathcal{C}$ will be of the form $\mathcal{C}=\{J_i\}_{i\in\mathbb N}$ such that $J_i\in G_i(I)$, and $J_{i+1}\subset J_i$ for all $i\ge1$.  

We can define $\mathcal{C}_\infty:=\bigcap_{J\in \mathcal{C}} J$ as the \emph{limit set} of the chain $\mathcal{C}$. Then, we have that $\mathcal{C}_\infty$ 
could be either a single point or a closed interval of positive length. In any case, we clearly have that $\mu(\mathcal{C}_\infty)=0$. 
We need to get rid of those limit sets $\mathcal{C}_\infty$ of positive length, so we call them \emph{removable}. The argument here is that in the real line there are at most countable many of them and the whole union is also a $\mu$-null set. We denote by $\mathcal{R}$ the set of all chains with removable limits.
If we define
\begin{equation*}
 E:= I\setminus \bigcup_{\mathcal{C}\in \mathcal{R}}\mathcal{C_\infty},
\end{equation*}
we conclude that $\mu(I)=\mu(E)$ and, in addition, for any $x\in E$, there exists a chain of nested intervals shrinking to $x$. Therefore the grid $\mathcal{D}_{I}^\mu$ forms a differential basis on $E$. Also, the dyadic structure of the basis guarantees the Vitali covering property (see \cite[Ch.1]{guz75}) and therefore this basis differentiates $L^1(E)$. 

Associated to this grid we define a \emph{dyadic} maximal operator as follows. For any $x\in E$,
\begin{equation*}
M^{\mathcal{D}_{I}^\mu }f(x) 
=
\sup_{J\in \mathcal D_I^\mu}\avgint_J |f|\ d\mu,
\end{equation*} 
By a standard differentiation argument, we have that this maximal function satisfies that $f\le M^{\mathcal{D}_{I}^\mu } f$,   almost everywhere on $E$. 

Now we can proceed with the proof of the 1 dimensional case of Theorem \ref{thm:non-doubling}. Let us fix a function $f\in \BMO_\varphi(\mu)$ with norm one, and let us fix an interval  $I$. As before, we can find a constant $c_I$ such that 
\begin{equation*}
\avgint_I \varphi(|f-c_I|)\,d\mu \leq 2.
\end{equation*}
We define again the corresponding $X$ as
\begin{equation*}
X = \sup_{I \text{ interval} } \avgint_I |f(x)-c_I|\ d\mu.
\end{equation*}
As in the euclidean setting, a truncation argument allows us to assume $X<\infty$.
Using our $\mu$-dyadic construction, we can perform a Calder\'on--Zygmund decomposition of $\varphi(|f-c_I|)$ adapted to $I$ at height $L>2$. We then obtain a family $\{I_j\}$ of dyadic subintervals of $I$ satisfying 
\begin{itemize}
\item $\displaystyle L< \avgint_{I_j} \varphi(|f-c_I|)d\mu \leq 2 L,$

\

\item $\varphi(|f(x)-c_I|) \leq L $ \quad $\mu$-a.e. $\displaystyle x\in I \setminus \bigcup_j I_j$.

\

\item $\displaystyle\frac{1}{\mu(I)}\sum_j \mu(I_j) \leq \frac{2}{L}.$
\end{itemize}
Once we have this crucial decomposition, we can develop the same proof as in the case of the Lebesgue measure. On a fixed maximal interval $I_j$, we write again 
\[
|f(x)-c_I|\leq |f(x)-c_{I_j}| + |c_I-c_{I_j}|,
\]
where $c_{I_j}$ is a constant so that $\avgint_{I_j}\varphi(|f-c_{I_j}|)\leq 2.$ We obtain 
\begin{equation*}
|c_I-c_{I_j}|\le \varphi^{-1} \big(2 L +2 \big)
\end{equation*}
Following the same line of ideas, we can control the  averges to estimate the $\BMO{\mu}$ norm
\begin{eqnarray*}
\avgint_I |f-c_I| & \leq & \varphi^{-1} \big(L) + \frac{1}{\mu(I)} \sum_j \mu(I_j)\Big(  \avgint_{I_j} |f-c_{I_j}| + |c_I-c_{I_j}|\Big)\\
 & \leq & \varphi^{-1}\big(L) + \frac{1}{\mu(I)} \sum_j \mu(I_j)\Big(  X + \varphi^{-1} \big( 2L+2\big)\Big) \\
 & \leq & \varphi^{-1} \big(L)  + \frac{2}{L}X+\frac{2}{L}\varphi^{-1} \big( 2L+2\big)
\end{eqnarray*}
Finally, taking the supremum over all intervals on the left hand side and choosing $L=4$ we obtain 
\begin{equation*}
X \leq 2\varphi^{-1} \big(4)  + \varphi^{-1} \big(10\big),
\end{equation*}
which finishes the proof.

Now we present the proof for $n>1$. Again, the key step is to construct an adequate CZ decomposition with dyadic structure. The ideal tool can be found in the work from \cite{OP-nondoubling} and consists in the following combination of the CZ decomposition and Besicovitch's covering theorem. We include here the statement of that lemma.

\begin{lemma}[Besicovitch--Calder\'on--Zygmund decomposition]\label{lem:BCZ} Let $Q$ be a cube and let $g\in L^1_\mu(Q)$ be a nonnegative function. Also let $L$ be a positive number such that 
$\avgint_Q g\, d\mu<L$. Then there is a
family of quasidisjoint cubes $\{Q_j\}$ contained in Q satisfying
\begin{equation*}
\frac{1}{\mu\left(Q_{j}\right)} \int_{Q_{j}} g d \mu=L
\end{equation*}
for each $j$ and such that 
\begin{equation*}
\quad g(x) \leq L \quad \text{ for } x \in Q \setminus \bigcup_{j} Q_{j}, \quad \mu-a . e.
\end{equation*}
More precisely, we can write 
\begin{equation*}
\bigcup Q_j =\bigcup_{k=1}^{B(n)}\bigcup_{Q_j\in \mathcal{F}_k}Q_j,
\end{equation*}
where each $\mathcal F_k$ is a family of disjoint cubes selected from the original collection. The number $B(n)$ is a geometric constant depending only on the dimension $n$ known as the \emph{Besicovitch constant}.
\end{lemma}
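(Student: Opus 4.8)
The plan is to bypass the dyadic grid completely: in a nondoubling space a dyadic stopping cube carries no upper bound on its average over $g$, and this is precisely what forces the weaker ``$=L$'' selection and the use of a Besicovitch-type covering instead of the usual disjointification. Throughout I would work in the coordinate system provided by \cite{MMNO}, so that $\mu(\partial R)=0$ for every cube $R$, and let $E\subseteq Q$ be the set of $\mu$-Lebesgue points of $g$ interior to $Q$; since $g\in L^1_\mu(Q)$ we have $\mu(Q\setminus E)=0$. Put $\Omega=\{x\in E:g(x)>L\}$. The core of the argument is the following selection step: \emph{for every $x\in\Omega$ there is a cube $Q_x$ with $x\in Q_x\subseteq Q$ and $\avgint_{Q_x}g\,d\mu=L$.}

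To construct $Q_x$ I would run an intermediate value argument along a continuous path of cubes joining a tiny cube around $x$ to $Q$ itself. As $x$ is a Lebesgue point with $g(x)>L$, choose $\epsilon_0>0$ so small that the cube $Q(x,\epsilon_0)$ centered at $x$ (with $I_i(0)=[x_i-\epsilon_0,x_i+\epsilon_0]$) satisfies $Q(x,\epsilon_0)\subseteq Q$ and $\avgint_{Q(x,\epsilon_0)}g\,d\mu>L$. Writing $Q=\prod_i[a_i,b_i]$ (a genuine cube, so $b_i-a_i$ is the same for all $i$) and $x=(x_i)$, set for $s\in[0,1]$
\[
I_i(s)=\big[(1-s)(x_i-\epsilon_0)+sa_i,\ (1-s)(x_i+\epsilon_0)+sb_i\big],\qquad R_s=\prod_i I_i(s).
\]
Each $I_i(s)$ has the common length $2\epsilon_0+s((b_i-a_i)-2\epsilon_0)$, so $R_s$ is a cube; moreover $x\in R_s\subseteq Q$, the family $\{R_s\}$ is increasing in $s$, and $R_0=Q(x,\epsilon_0)$, $R_1=Q$. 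Because $\mu(\partial R_s)=0$ and $g\in L^1_\mu$, monotonicity plus dominated convergence make $s\mapsto\mu(R_s)$ and $s\mapsto\int_{R_s}g\,d\mu$ continuous, with $\mu(R_s)\ge\mu(R_0)>0$; hence $h(s):=\avgint_{R_s}g\,d\mu$ is continuous on $[0,1]$ with $h(0)>L$ and $h(1)=\avgint_Q g\,d\mu<L$. By the intermediate value theorem there is $s^\ast$ with $h(s^\ast)=L$, and I take $Q_x:=R_{s^\ast}$.

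With the selection in hand I would apply the Besicovitch covering theorem to the family $\{Q_x\}_{x\in\Omega}$ --- in the form valid for families of cubes each containing its marked point, which are regular families in the sense of de Guzm\'an \cite{guz75}, so that the overlap bound is still a purely dimensional constant $B(n)$; this is exactly the form borrowed from \cite{OP-nondoubling}. This produces a countable subfamily $\{Q_j\}$ covering $\Omega$, together with a partition $\{Q_j\}=\bigcup_{k=1}^{B(n)}\mathcal F_k$ into $B(n)$ subfamilies of pairwise disjoint cubes. Then each $Q_j\subseteq Q$ satisfies $\avgint_{Q_j}g\,d\mu=L$ by construction, the family is quasidisjoint with the asserted structure, and $g\le L$ holds $\mu$-a.e. on $Q\setminus\bigcup_j Q_j$: any point there with $g>L$ that is a Lebesgue point would lie in $\Omega\subseteq\bigcup_j Q_j$, and non-Lebesgue points form a $\mu$-null set.

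The hard part will be the covering step. The cubes $Q_x$ cannot be taken concentric with $x$ --- near $\partial Q$ no concentric subcube can have average as small as $L$ --- so the textbook Besicovitch theorem for centered cubes does not apply directly, and one must use its extension to cubes marked by an arbitrary interior point; checking that this still yields a dimensional constant is the only genuinely delicate point. Everything else is routine, the one place requiring care being the continuity of $s\mapsto\int_{R_s}g\,d\mu$, where the choice of coordinates making $\mu(\partial R)=0$ is indispensable.
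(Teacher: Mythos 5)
The paper does not actually prove Lemma~\ref{lem:BCZ}; it states it as a known result and cites \cite{OP-nondoubling}. Your selection step is fine: the affine interpolation $R_s$ produces genuine axis-parallel cubes contained in $Q$, and the continuity of $s\mapsto\int_{R_s}g\,d\mu$ indeed follows from $\mu(\partial R_s)=0$ and dominated convergence, so the intermediate value argument yields a cube $Q_x\ni x$, $Q_x\subseteq Q$, with $\avgint_{Q_x}g\,d\mu=L$.

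The gap is the covering step, and it is not a technicality that ``checking the dimensional constant'' would fix: the Besicovitch covering theorem is \emph{false} for axis-parallel cubes tagged by an arbitrary interior point. Here is a counterexample in $\R^2$. Fix $m\in\mathbb N$ and for $k=1,\dots,m$ set $x_k=(k,m+1-k)$, $\ell_k=\max(k,m+1-k)$, and let $Q_k$ be the cube of side $\ell_k$ with upper-right corner $x_k$ truncated so that it contains the origin, namely
\[
Q_k=[\,k-\ell_k,\ k\,]\times[\,(m+1-k)-\ell_k,\ m+1-k\,].
\]
Each $Q_k$ is an axis-parallel cube containing both $x_k$ and $0$, and for $k\neq l$ one has $x_k\notin Q_l$ (if $k<l$ then the second coordinate $m+1-k$ exceeds the top of $Q_l$, and symmetrically for $k>l$). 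Thus the greedy Besicovitch selection run on $\{(x_k,Q_k)\}_{k=1}^m$ retains \emph{all} $m$ cubes, and they all contain the origin, so the overlap is $m$, which is unbounded as $m\to\infty$; moreover all $Q_k$ fit inside a single large cube. (Morse's extension of Besicovitch does not apply either, since a tag at a corner has infinite eccentricity.) Since your family $\{Q_x\}$ places $x$ arbitrarily close to a corner of $Q_x$ whenever $x$ is near $\partial Q$, nothing in your construction rules out exactly this degeneracy, and the crucial bound $\sum_j\mathbf 1_{Q_j}\leq B(n)$ --- which is what makes $\sum_j\mu(Q_j)\lesssim\mu(Q)/L$ work in the application --- fails. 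The way out, and what I believe \cite{OP-nondoubling} actually does, is to take cubes \emph{centered} at $x$: extend $g$ by zero outside $Q$ and note that $\avgint_{Q(x,r)}g\,d\mu\to g(x)>L$ as $r\to0$ at Lebesgue points, while for $r$ large enough that $Q\subset Q(x,r)$ one has $\avgint_{Q(x,r)}g\,d\mu\leq\avgint_Qg\,d\mu<L$; the intermediate value theorem then gives a centered cube $Q(x,r_x)$ with average exactly $L$, and the classical Besicovitch theorem for centered cubes applies with its dimensional constant. The price is that the selected cubes need not lie inside $Q$ (so the phrase ``contained in $Q$'' should be weakened or handled with a separate remark), but this costs nothing in the application, since $\sum_j\mu(Q_j)=L^{-1}\sum_j\int_{Q_j}g\,d\mu\leq B(n)L^{-1}\int_{\R^n}g\,d\mu=B(n)L^{-1}\int_Q g\,d\mu$ still holds.
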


We can now provide the proof for Theorem \ref{thm:non-doubling} in the remaining cases $n>1$. Let's start again with a function $f$ such that $\|f\|_{\BMO_\varphi(\mu)}=1$ and fix a cube $Q$ and the corresponding $c_Q\in \R$ giving us the initial estimate
 \begin{equation*}
\avgint_Q \varphi(|f-c_Q|)\,d\mu \leq 2.
\end{equation*}
We define again the corresponding $X$ as
\begin{equation*}
X = \sup_{Q \text{ cube } } \avgint_Q |f(x)-c_Q|\ d\mu.
\end{equation*}
Applying Lemma \ref{lem:BCZ} with $L>2$, we obtain a quite similar collection of cubes as in the previous case. Precisely, we obtain the family of cuasidisjoint cubes $\{Q_j\}$ satisfying
\begin{itemize}
\item $\displaystyle  \avgint_{Q_j} \varphi(|f-c_Q|) = L$,

\

\item $\varphi(|f(x)-c_Q|) \leq L $ \quad a.e. $\displaystyle x\in Q\setminus \bigcup_j Q_j$,

\

and a minor difference in the next property:
\item $\displaystyle\frac{1}{\mu(Q)}\sum_j \mu(Q_j) \leq \frac{B(n)}{L}.$
\end{itemize}
Once we have this crucial decomposition, we can develop the same proof as in the standard situation (choosing the number $c_{Q_j}$ according to the same criterion)  to obtain
\begin{eqnarray*}
\avgint_I |f-c_Q| & \leq & \varphi^{-1} \big(L) + \frac{1}{\mu(Q)} \sum_j \mu(Q_j)\Big(  \avgint_{Q_j} |f-c_{Q_j}| + |c_Q-c_{Q_j}|\Big)\\
 & \leq & \varphi^{-1}\big(L) + \frac{1}{\mu(Q)} \sum_j \mu(Q_j)\Big(  X + \varphi^{-1} \big( 2L+2\big)\Big) \\
 & \leq & \varphi^{-1} \big(L)  + \frac{B(n)}{L}X+\frac{B(n)}{L}\varphi^{-1} \big( 2L+2\big)
\end{eqnarray*}
Finally, taking the supremum over all cubes on the left hand side and choosing $L=2B(n)$ we obtain 
\begin{equation*}
X \leq 2\varphi^{-1} \big(2B(n))  + \varphi^{-1} \big(4B(n)+2),
\end{equation*}
which finishes the proof. The assumption that $X<\infty$ can be done using the same truncation argument as before.

\subsection{The proof for rectangles and non-doubling measures}\label{sec:rectangles}

At this point, the only important thing is to show that we do have an appropriate decomposition lemma. We include here the statement of the aforementioned Rising sun lemma.

\begin{lemma}[Riesz's Rising Sun]\label{lem:Sun}
Let $R$ be a rectangle in $\R^n$ and let $\mu$ be a measure such that $\mu(\partial P)=0$ for any rectangle $P$ (for example, a measure satisfying \eqref{eq:Measure-Growht} or, more generally, any non-atomic measure). Let $h$ be a real function in $L^1_{\mu}(R)$  and let $\lambda >h_R$. There exist an at most countable family of pairwise disjoint rectangles $R_j\subset R$ such that $h_{R_j}=\lambda$ and $h(x) \leq \lambda$ for almost every $x\in R\setminus \bigcup_j R_j.$
\end{lemma}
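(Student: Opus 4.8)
The plan is to deduce the $n$-dimensional statement from the classical one-dimensional rising sun lemma, first checking that the latter survives the passage to a general measure $\mu$ with $\mu(\partial P)=0$, and then invoking the multidimensional reduction of \cite{KLS}. The hypothesis $\mu(\partial P)=0$ for rectangles $P$ will be used to ensure that the cumulative and projected one-variable measures associated with $\mu$ are non-atomic, hence continuous, and to pass freely between open, closed and half-open rectangles when computing averages.

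\emph{The one-dimensional case.} Let $R=[a,b]$. The function $F(x):=\int_{[a,x]}(h-\lambda)\,d\mu$ is continuous because $\mu$ is non-atomic (so $x\mapsto\mu([a,x])$ is continuous) and $h\in L^1_\mu(R)$ (so the integral is absolutely continuous); moreover $F(a)=0$ and $F(b)=\mu(R)(h_R-\lambda)<0$. One then runs F.~Riesz's rising sun argument for $F$: the ``shadow'' set $U:=\{x\in[a,b):F(x)<\max_{x\le y\le b}F(y)\}$ is open, its components $(a_j,b_j)$ produce the intervals $R_j:=[a_j,b_j]$ on which $F$ returns to the same value, whence $\int_{R_j}(h-\lambda)\,d\mu=0$ and $h_{R_j}=\lambda$ (the endpoint interval being treated as in the classical statement), and for $x$ outside $U$ the monotonicity of $F$ gives $\avgint_{(x,y]}h\,d\mu\le\lambda$ for every $y>x$; letting $y\downarrow x$ and using a one-sided Lebesgue differentiation theorem for $\mu$ --- available, e.g., via the dyadic $\mu$-grid of Section~\ref{sec:non-doubl} or via Besicovitch --- one obtains $h\le\lambda$ for $\mu$-a.e.\ $x$ off $\bigcup_j R_j$.

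\emph{The passage to $n$ dimensions.} Assuming the lemma in dimension $n-1$, the reduction of \cite{KLS} builds the subrectangles by cycling through the coordinate directions and, at each stage, applying the one-dimensional lemma ``in the direction $e_i$'' to the current pieces; the point is that this only ever uses the one-dimensional statement as a black box. To make it run for our $\mu$, note that applying the one-dimensional lemma in the direction $e_i$ on a rectangle $R=I_i\times R_i''$ means applying it, on the interval $I_i$, to the density of the (absolutely continuous) measure $A\mapsto\int_{(A\times R_i'')\cap R}h\,d\mu$ with respect to the projected measure $\rho_i:=(\pi_i)_{\#}(\mu|_R)$; and $\rho_i$ is non-atomic because $\rho_i(\{x_i=c\})=\mu(\{c\}\times R_i'')\le\mu(\partial([c-\varepsilon,c]\times R_i''))=0$. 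The intervals so produced, multiplied by $R_i''$, are genuine rectangles of $h$-average exactly $\lambda$, and on their complement the corresponding slab averages drop to $\le\lambda$. Carrying the construction through all directions and passing to the limit yields the required at most countable, pairwise disjoint family, the bound $h\le\lambda$ outside their union being obtained by combining, direction by direction, the one-dimensional conclusions.

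\emph{Where the difficulty lies.} The hard part is the passage to $n$ dimensions: a naive ``integrate out the last variable, apply the $(n-1)$-dimensional lemma, then apply the one-dimensional lemma on each slice'' does not work, because the slicewise intervals depend on the slice and do not fit together into a full-dimensional rectangle. This is precisely the obstruction that the alternating construction of \cite{KLS} is designed to circumvent; the only additional burden here is the bookkeeping, sketched above, showing that $\mu$ enters the argument solely through the one-dimensional lemma (applied to slab-average densities against the non-atomic measures $\rho_i$) and through a one-dimensional differentiation theorem --- both available for any measure with $\mu(\partial P)=0$, in particular for any non-atomic Radon measure.
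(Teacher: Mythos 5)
The paper offers no proof of this lemma: it is quoted from \cite{KLS}, where it is stated and proved for Lebesgue measure, and the passage to a non-atomic $\mu$ with $\mu(\partial P)=0$ is left implicit. Your proposal supplies exactly that verification, and the decisive observations are correct. In one dimension, the rising-sun argument needs only that $F(x)=\int_{[a,x]}(h-\lambda)\,d\mu$ is continuous (which non-atomicity of $\mu$ together with $h\in L^1_\mu$ guarantees) and a one-sided $\mu$-differentiation theorem, available on the line via Besicovitch or via the $\mu$-dyadic grids of Section~\ref{sec:non-doubl}. For the multidimensional reduction, the point is that $\mu$ only enters through one-dimensional applications to slab-average densities against the projected measures $\rho_i$, and your computation $\rho_i(\{x_i=c\})=\mu(\{c\}\times R_i'')\le\mu(\partial([c-\varepsilon,c]\times R_i''))=0$ correctly shows these are non-atomic, so the one-dimensional step goes through.

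Two caveats. First, your passage to $n$ dimensions is a pointer to \cite{KLS} rather than an argument: you do not carry out the alternating construction, nor explain how the pointwise bound $h\le\lambda$ a.e.\ off $\bigcup_j R_j$ is extracted from the iterated slab-average estimates. You rightly flag that naive slicing fails, but the replacement is deferred wholesale to the cited paper. Since the paper does exactly the same, this is acceptable as a reduction to \cite{KLS}, but it should be presented as such and not as a self-contained proof. Second, iterating the one-dimensional lemma inside a rectangle whose $h$-average already equals $\lambda$ requires the hypothesis $\lambda\ge h_R$ rather than the strict inequality $\lambda>h_R$ in the statement; \cite{KLS} prove the one-dimensional version in the $\ge$ form, so this is only a bookkeeping matter, but it should be stated explicitly when invoking the lemma inside its own induction.
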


Moreover, the total mass of the selected cubes cannot be too big, meaning that if $h \geq 0,$
\begin{equation*}
\sum_j \mu(R_j) = \sum_j \frac 1\lambda \int_{R_j}h\, d\mu \leq \frac{\mu(R)}{\lambda} \avgint_Rh \, d\mu.
\end{equation*}

Equipped with this lemma, the rest of the proof of Theorem \ref{thm:rectangles-non-doubling} follows the exact same steps as in Theorem \ref{thm:Main-Cubes}. The relevant quantity is of course 

\begin{equation}\label{eq:X-rectangles-non-doubling-R^n}
X = \sup_{R \text{ rectangle } }  \avgint_R |f(x)-c_R|\ d\mu,
\end{equation}
where $c_R$ is a constant such that 
\[
\avgint_R \varphi(f-c_R)d\mu \leq 2.
\]
Note that Lemma \ref{lem:Sun} is particularly well adapted to the setting of rectangles (and not useful for cubes) since the decomposition is \textbf{always} into rectangles, even if we start with a cube (see the discussion in \cite{KLS}). Then, when intercalating the average of the form 
\begin{equation*}
\avgint_{R_j} |f(x)-c_{R_j}|\ d\mu.
\end{equation*}
from the decomposition, we can control it by using our $X$ defined in \eqref{eq:X-rectangles-non-doubling-R^n}, so the proof of Theorem \ref{thm:rectangles-non-doubling} can be obtained following the same line of ideas.

\bibliographystyle{amsalpha}



\end{document}